
\documentclass[12pt]{article}
\usepackage{amssymb}
\usepackage{amsmath}
\usepackage{amsfonts}
\usepackage{tikz}
\usepackage{bookmark}

\setcounter{MaxMatrixCols}{10}

\setlength{\textwidth}{6.3in}
\setlength{\textheight}{9in}
\setlength{\oddsidemargin}{0in}
\setlength{\evensidemargin}{0in}
\setlength{\topmargin}{-.375in}
\newtheorem{theorem}{Theorem}

\newtheorem{corollary}[theorem]{Corollary}

\newtheorem{lemma}[theorem]{Lemma}

\newtheorem{proposition}[theorem]{Proposition}
\newtheorem{remark}[theorem]{Remark}

\newenvironment{proof}[1][Proof]{\noindent\textbf{#1.} }{\ \rule{0.5em}{0.5em}}
\begin{document}

\title{Moment free deviation inequalities for linear combinations of independent random variables with power-type tails}
\author{Daniel J. Fresen\thanks{%
University of Pretoria, Department of Mathematics and Applied Mathematics,
daniel.fresen@up.ac.za}}
\date{}
\maketitle

\begin{abstract}
We present order of magnitude estimates for the quantiles of non-negative linear combinations of non-negative random variables, as well as deviation inequalities for general linear combinations of independent random variables, under the assumption that all random variables satisfy the same power-type tail bound on $\mathbb{P}\{\left\vert X_i\right\vert>t\}$ of the form $t^{-q}$, $t^{-q/2}$ or $t^{-q/2}(\ln t)^{q/2}$, for $q>2$. The third type is applicable in the nonlinear setting. In the situations we consider, these results improve on classical estimates of Nagaev.
\end{abstract}

\tableofcontents

\section{Introduction}

In Lata\l a's 1997 paper \cite{Lat97}, the problem of estimating $L_p$ norms of sums of independent random variables, i.e. $\left(\mathbb{E}\left\vert \sum_{i=1}^n X_i\right\vert^p\right)^{1/p}$, was reduced to the problem of evaluating a type of Orlicz norm. In the case where the distribution of each $X_i$ decays quicker than any power function, these moment estimates can often be used in conjunction with Markov's inequality to obtain correct order of magnitude bounds on the quantiles of $\sum X_i$. In the setting where the tail probabilities decay like power functions, e.g. $\mathbb{P}\{\left\vert X_i\right\vert >t\}=(1+t)^{-q}$ for $q\in(2,\infty)$, the corresponding $L_p$ norms are finite only in a bounded range of $p$ and order of magnitude estimates on these norms do not contain enough information to recover correct order of magnitude estimates for the quantiles and tail probabilities of $\sum X_i$. In the example just given, one misses sub-Gaussian estimates in the central region of the distribution and is off by a poly-logarithmic factor in the tails.

Deviation inequalities for sums of heavy tailed random variables have been studied extensively, and results are given at varying degrees of precision, generality and usability, under various assumptions on the tails, in the asymptotic sense with $n\rightarrow\infty$ and in the non-asymptotic sense (i.e. quantitative bounds that hold for all $n$ or for $n>n_0$). We refer the reader to \cite{Fr20, GaRaRe, MikNag, NagAop} and the references therein for more details. Of particular relevance is a result of Nagaev (following an earlier result of Linnik), see for example \cite[Theorem 1.9]{NagAop}, which we simplify for convenience: If $q>2$ and $(X_i)_1^n$ are symmetric and i.i.d. with $\mathbb{P}\{\left\vert X_i \right\vert >t\}=(1+t)^{-q}$ for $t>0$, then
\[
\mathbb{P}\left\{\frac{1}{\sqrt{n\mathbb{E}X_i^2}}\sum_{i=1}^nX_i>t\right\}=(1+o(1))\left(1-\Phi(t)\right)+(1+o(1))n\mathbb{P}\{X_i >\sqrt{n}t\}
\]
for $n\rightarrow\infty$ and $t\geq\sqrt{n}$. In the full statement the symmetry is not needed and the CDF may involve a slowly varying function. See the given reference for details. In the case of linear combinations of these same variables, one has
\begin{equation*}
\mathbb{P}\left\{ \sum_{i=1}^na_iX_i>s\right\} \leq \left(1+\frac{2}{p}\right)^p\mathbb{E}\left(\max\{0,X_1\}^p\right)s^{-p}\sum_{i=1}^n\left\vert a_i\right\vert ^p+\exp\left(\frac{-2(p+2)^{-2}e^{-p}s^2}{\sum_{i=1}^n\left\vert a_i\right\vert ^2\mathbb{E}X_1^2}\right)
\end{equation*}
Here we are using \cite[Corollary 1.8]{NagAop} where our $a_iX_i$ represents his $X_i$, and $p\in[2,q)$. This goes back to a 1971 result of Fuk and Nagaev \cite{FuNa71} and implies
\begin{equation*}
\mathbb{P}\left\{\sum_{i=1}^na_iX_i> \frac{p+2}{2}e^{p/2}\left(\mathbb{E}X_1^2\right)^{1/2}t\left\vert a \right\vert_2 +\left[\mathbb{E}\left(\max\{0,X_1\}^p\right)\right]^{1/p}e^{t^2/(2p)}\left\vert a \right\vert_p)\right\}\leq Ce^{-t^2/2}
\end{equation*}
where $\left\vert a \right\vert_q=\left( \sum_{i=1}^{n}\left\vert a_{i}\right\vert^q\right) ^{1/p}$ is the $\ell_q^n$ norm. For large values of $t$ the second term dominates, and to minimize the deviation requires $p$ close to $q$, but not too close. For $p>(q-1)/2$,
\[
\left[\mathbb{E}\left(\max\{0,X_1\}^p\right)\right]^{1/p}\leq\left(\frac{q}{2}\int_0^1u^pdu+\frac{q}{2}\int_1^\infty u^{-q-1+p}du\right)^{1/p}\leq\left(\frac{q}{q-p}\right)^{1/p}
\]
with a lower bound if we include an extra factor of $2^{-(2+q)/p}$. The minimum of
\[
e^{h(p)}:=\exp\left(\frac{t^2}{2p}+\frac{1}{p}\ln \frac{q}{q-p}\right)
\]
for $p\in[2,q)$ is achieved when
\[
\frac{t^2}{2}=\ln\left(1-\frac{1}{s}\right)+\frac{1}{s-1} \hspace{0.5cm}\text{where}\hspace{0.5cm} s=\frac{q}{p}
\]
The solution satisfies
\[
1+\frac{1}{t^2/2+\ln t^2}\leq s\leq 1+\frac{1}{t^2/2+\ln(t^2/2)}
\]
so for large values of $t$ one has
\begin{equation}
\mathbb{P}\left\{\sum_{i=1}^na_iX_i>Ct^{2/q}e^{t^2/(2q)}\left\vert a \right\vert_q)\right\}\leq Ce^{-t^2/2}\label{Nagaev orig}
\end{equation}

The contributions of this paper are twofold:

\medskip

\noindent $i.$ An improved deviation inequality for linear combinations of independent random variables with power type decay, presented in Section \ref{main st}, that removes the factor $t^{2/q}$ from \ref{Nagaev orig}.

\medskip

\noindent $ii.$ Order of magnitude estimates for non-negative linear combinations of non-negative random variables, and various other tools, that are used to prove the deviation inequality mentioned in $(i)$ above. These are also used in \cite{FrIIb} to prove a deviation inequality in the nonlinear setting. We must stress that for us, at least, and we hope for others, these tools are of significant independent interest and utility. They are presented in Section \ref{Theory}.

\section{Main result\label{main st}}

\begin{theorem}
\label{poly tails Lip}There exists a universal constant $C>0$ such that the
following is true. Let $n\in \mathbb{N}$, $2<q<\infty $, let $a\in\mathbb{R}^n$ with $a\neq 0$, and let $\left( X_{i}\right) _{1}^{n}$ be a sequence of independent random variables such that for all $t>0$,%
\begin{equation}
\mathbb{P}\left\{\left\vert X_i \right\vert >t\right\}\leq 2(1+t)^{-q}\label{tail assumption bound}
\end{equation}
For all $t>0$,
\begin{equation}
\mathbb{P}\left\{ \left\vert \sum_{i=1}^na_iX_i -\mathbb{E}\sum_{i=1}^na_iX_i
\right\vert >C_{q}\left(t\left\vert a \right\vert_2 +e^{t^2/(2q)}\left\vert a \right\vert_q\right)\right\} \leq Ce^{-t^2/2}  \label{poly conc one}
\end{equation}
where $C_{q}>1$ is a function of $q$.
\end{theorem}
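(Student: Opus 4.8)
The plan is to realize $X=(X_i)_1^n$ as $T(Z)$ for a standard Gaussian vector $Z$ in $\mathbb R^n$ and to apply Pisier's form of the Gaussian concentration inequality to the coordinatewise monotone map $g(z)=\sum_i a_iT_i(z_i)$, the point being that $\nabla g$, while unbounded, is typically moderate. Several reductions come first. Writing $S=\sum_i a_iX_i$, the weak symmetrization inequality $\mathbb P\{|S-\mathbb MS|\ge s\}\le 2\mathbb P\{|S-S'|\ge s\}$ lets us pass to $S-S'\overset d=\sum_i a_i\tilde X_i$ with $\tilde X_i=X_i-X_i'$ symmetric and centred at $0$, at the price of replacing the constant $2$ in \eqref{tail assumption bound} by a function of $q$. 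For symmetric summands, marginal stochastic domination of the magnitudes passes to the convex order of the sums --- a one-coordinate-at-a-time argument using that $v\mapsto\tfrac12(\phi(c+bv)+\phi(c-bv))$ is even and convex, hence nondecreasing on $[0,\infty)$, whenever $\phi$ is convex --- and convex order transfers tail bounds up to a constant factor in the radius through Markov's inequality. Hence it suffices to treat $X_i=Y_i$ for a fixed symmetric $Y_i$ with a smooth strictly positive density and $\mathbb P\{|Y_i|>t\}\asymp_q(1+t)^{-q}\wedge1$ (mollification gives the smooth case; a limiting argument removes it at the end). Then $T_i=Q_{Y_i}\circ\Phi$ (with $Q_{Y_i}$ the quantile function of $Y_i$) is explicit and smooth, and $\mathbb P\{|Y_i|>t\}\asymp_q(1+t)^{-q}$ together with $1-\Phi(z)\asymp e^{-z^2/2}/(1+z)$ yield $|T_i(z)|\lesssim_q(1+|z|)^{1/q}e^{z^2/(2q)}$ and $|T_i'(z)|\lesssim_q(1+|z|)^{1+1/q}e^{z^2/(2q)}$.

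Now $\sum_i a_iY_i\overset d= g(Z)$, $\mathbb Eg(Z)=0$, and $|\nabla g(z)|^2=\sum_i a_i^2T_i'(z_i)^2\lesssim_q R(z)^2:=\sum_i a_i^2(1+|z_i|)^{2+2/q}e^{z_i^2/q}$. Pisier's inequality gives $\mathbb E\phi(g(Z))\le\mathbb E_{Z,Z'}\phi\big(\tfrac\pi2\langle\nabla g(Z),Z'\rangle\big)$ for every convex $\phi$; conditioning on $Z$, the inner variable is centred Gaussian with variance $|\nabla g(Z)|^2$, so $g(Z)\preceq_{cx}\tfrac\pi2|\nabla g(Z)|\,G\preceq_{cx} c_qR(Z)\,G$ for $G\sim N(0,1)$ independent of $Z$ (the evenness/convexity remark once more), and likewise for $-g(Z)$. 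Combining with Markov's inequality $\mathbb P\{g(Z)>2\rho\}\le\rho^{-1}\mathbb E\big(c_qR(Z)G-\rho\big)_+=\rho^{-1}\int_\rho^\infty\mathbb P\{c_qR(Z)G>u\}\,du$ and splitting, at each level $u$, into $\{R(Z)\text{ large}\}$ and $\{|G|\text{ large}\}$, the theorem reduces to the dimension-free estimate
\[
\mathbb P\Big\{\sum_{i=1}^n a_i^2(1+|Z_i|)^{2+2/q}e^{Z_i^2/q}>c_q\Big(|a|^2+e^{s^2/q}\sum_{i=1}^n i^{-1+2/q}a_{[i]}^2\Big)\Big\}\le C\,s^{\kappa(q)}e^{-s^2/2}
\]
for all $s>0$. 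The polynomial prefactor $s^{\kappa(q)}$ is harmless because the radius in \eqref{poly conc one} grows like $t\,e^{t^2/(2q)}$, and a short computation with $h(x)=\mathbb E(G-x)_+$ (using that $r\mapsto re^{-b/r^2}$ is increasing and the identity $\tfrac12(1-\tfrac1q)+\tfrac1{2q}=\tfrac12$) converts the display into \eqref{poly conc one}; the $q$-dependent constants generated along the way are absorbed into $C_q$, leaving an absolute constant in the probability bound, and the remaining mean-versus-median gap is $O_q(|a|+A)$, hence swallowed by the radius for $t$ bounded below (for $t$ small the bound \eqref{poly conc one} is vacuous).

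The crux, and the step I expect to be the main obstacle, is the displayed deviation bound for the weighted sum $\sum_i a_i^2\zeta_i$ of the independent nonnegative variables $\zeta_i=(1+|Z_i|)^{2+2/q}e^{Z_i^2/q}$, which are regularly varying of index $-q/2<-1$ (here $q>2$ is essential). The scheme I would follow is to truncate $\zeta_i$ at a level $L$: the truncated parts have expectation $\lesssim_q|a|^2$ (finite because $q/2>1$) and concentrate by Bernstein's inequality, whereas the overshoots $(\zeta_i-L)_+$ are active only on the rare events $\{\zeta_i>L\}$, so one union-bounds over the number $k$ of excited coordinates and their identity --- a configuration of size $k$ costing $\binom nk\mathbb P\{\zeta_1>L\}^k$ and contributing about $L\sum_{j\le k}a_{[j]}^2$ --- and optimizes $L$ against the target probability; this is what produces the weight $i^{-1+2/q}$ and the factor $e^{s^2/q}=(e^{-s^2/2})^{-2/q}$. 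Such heavy-tailed weighted-sum estimates are exactly the convex-majorization inequalities developed in \cite{Fr conv maj}, which I would invoke or re-derive in the case at hand; extracting the \emph{weighted} $\ell^2$ quantity, rather than a crude dimension-dependent union bound, is the delicate point.
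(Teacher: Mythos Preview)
Your global architecture matches the paper's: write $X=T(Z)$, apply Pisier, and reduce everything to a deviation estimate for the weighted sum $\sum_i a_i^2\zeta_i$ with $\zeta_i$ regularly varying of index $-q/2$. Your preliminary reductions (symmetrization, then stochastic domination to a fixed smooth model) are a reasonable alternative to the paper's trick of adding an independent $Y_i$ to enforce condition~\eqref{Lip poly cond} on the quantile derivative, and your convex-order-to-tails transfer via $\mathbb P\{W>2\rho\}\le\rho^{-1}\mathbb E(V-\rho)_+$ is in the same spirit as the paper's use of \cite{Fr conv maj}.

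The gap is precisely where you flag it. Your outline ``union-bound over the number $k$ of excited coordinates, configuration cost $\binom nk\mathbb P\{\zeta_1>L\}^k$, contribution about $L\sum_{j\le k}a_{[j]}^2$'' does not deliver the weight $i^{-1+2/q}$. The contribution claim is wrong as a deviation statement: on $\{\zeta_i>L\}$ the overshoot $(\zeta_i-L)_+$ is not $O(L)$ pointwise (for Pareto-type tails the conditioned variable is again Pareto of the same index, rescaled by $L$), so you face the same problem on the excited set and the scheme is recursive rather than closed. More fundamentally, single-level truncation plus a union bound over configurations only controls the excited block through its \emph{maximum}; even in the equal-coefficient case $a_i\equiv1$ it yields at best $nL$ with $L\asymp n^{2/q}e^{s^2/q}$, i.e.\ $n^{1+2/q}e^{s^2/q}$, a full factor of $n$ off the correct $n+n^{2/q}e^{s^2/q}$. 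That factor of $n$ --- the passage from $k\cdot\max$ to the averaged $\sum_{i\le k}U_i$ --- is exactly what the paper's machinery captures: Section~\ref{SDKW aaa} bounds $\sum_{i\le k}U_i$ for $\{0,1\}$-coefficients via order statistics; Section~\ref{dual norm} packages the quantile bound into the norm $[\cdot]_\delta$; and Section~\ref{u b norm} introduces the auxiliary norm $|\cdot|_{r,q/2}$ whose key property (Lemma~\ref{comparing norms}) is that a norm inequality verified only on $\{0,\pm1\}^n$ extends automatically to all of $\mathbb R^n$. The formula in Proposition~\ref{estimates normii hieri} then reads off $\sum_i i^{-1+2/q}x_{[i]}$. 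This convexity-based extrapolation from the equal-coefficient case to arbitrary coefficients is the main technical idea of the paper, and your sketch provides no substitute for it. Finally, \cite{Fr conv maj} is about tail comparison under convex majorization (it is what lets one pass from Pisier's convex-order conclusion to quantile bounds), not about heavy-tailed weighted-sum deviation estimates, so invoking it will not close the gap either.
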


\underline{Notation and conventions:} $\mathbb{M}$ denotes median, $C$, $c$ etc. denote positive universal constants that may take on different values at each appearance, whose values we do not necessarily control. $C_q$, $c_q$ etc. denote `constants' that depend on $q$ (i.e. functions of $q$). $\left\vert a \right\vert_2$ is often denoted $\left\vert a \right\vert$. Our usage of the term `random variable' is limited to the real valued case.

\section{\label{Theory}Non-negative linear combinations of non-negative i.i.d. RV}

\subsection{\label{SDKW aaa}Sums of order statistics (case of equal coefficients)}

Concentration inequalities for the binomial distribution and the study of order statistics of uniform $(0,1)$ random variables are of course quite standard. See for example \cite{BLMass, ScWe}. In this section we present several results, based on classical techniques like the exponential moment method and the R\'{e}nyi representation of
order statistics, tailored to our purposes.

Define $\xi _{1}:\left[ 0,1\right] \rightarrow \left[
0,1\right] $ and $\xi _{2}:\left[ 0,\infty \right) \rightarrow \left( 0,1%
\right] $ by%
\begin{equation}
\xi _{1}(t)=e^{t}\left( 1-t\right) \hspace{0.65in}\xi _{2}(t)=e^{-t}\left(
1+t\right)  \label{deviation fncts}
\end{equation}%

\begin{lemma}\label{numebecali}
\begin{eqnarray*}
\xi _{1}^{-1}(t) &\leq &\min \left\{ \sqrt{2\left( 1-t\right) }%
,1-e^{-1}t\right\} :0\leq t\leq 1 \\
\xi _{2}^{-1}(t) &\leq &\left\{ 
\begin{array}{ccc}
\log t^{-1}+\log \left( 1+4\log t^{-1}\right) & : & 0<t\leq 2e^{-1} \\ 
\sqrt{2\log t^{-1}+10\left( \log t^{-1}\right) ^{3/2}} & : & 2e^{-1}\leq
t\leq 1%
\end{array}%
\right.
\end{eqnarray*}
\end{lemma}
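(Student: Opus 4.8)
The strategy throughout is to verify each asserted inequality by checking it at endpoints and controlling the derivative, or equivalently by substituting the claimed bound back into $\xi_1$ or $\xi_2$ and using monotonicity. Note first that $\xi_1$ is strictly decreasing on $[0,1]$ from $\xi_1(0)=1$ to $\xi_1(1)=0$ (since $\xi_1'(t)=-te^t\le 0$), and $\xi_2$ is strictly decreasing on $[0,\infty)$ from $\xi_2(0)=1$ to $0$ (since $\xi_2'(t)=-te^{-t}\le 0$), so both inverses are well-defined and decreasing, and an inequality $\xi_j^{-1}(t)\le b$ is equivalent to $\xi_j(b)\le t$ (when $b$ lies in the domain). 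So in each case I would set $b$ equal to the claimed upper bound and show $\xi_1(b)\le t$ or $\xi_2(b)\le t$ respectively; if $b$ exceeds the right endpoint of the domain the inequality is trivial.

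\textbf{The bound on $\xi_1^{-1}$.} For the estimate $\xi_1^{-1}(t)\le\sqrt{2(1-t)}$: writing $1-t=s\in[0,1]$, I must show $\xi_1(\sqrt{2s})\le 1-s$, i.e. $e^{\sqrt{2s}}(1-\sqrt{2s})\le 1-s$. Both sides equal $1$ at $s=0$; comparing Taylor expansions (or differentiating) gives $e^{u}(1-u)\le 1-u^2/2$ for $u\ge 0$ — indeed $\frac{d}{du}\bigl[(1-u^2/2)-e^u(1-u)\bigr]=-u+ue^u=u(e^u-1)\ge 0$ — and applying this with $u=\sqrt{2s}$ yields exactly $e^{\sqrt{2s}}(1-\sqrt{2s})\le 1-s$. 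For the estimate $\xi_1^{-1}(t)\le 1-e^{-1}t$: I must show $\xi_1(1-e^{-1}t)\le t$, i.e. $e^{1-e^{-1}t}\,e^{-1}t\le t$, i.e. $e^{-e^{-1}t}\le e^{-1}$, i.e. $e^{-1}t\ge1$... which fails for small $t$. So I would instead use convexity/concavity: $\xi_1$ is convex on $[0,1]$ (as $\xi_1''(t)=-e^t(1+t)$ — no, that is negative, so $\xi_1$ is \emph{concave}), hence $\xi_1$ lies above the chord joining $(0,1)$ and $(1,0)$, i.e. $\xi_1(t)\ge 1-t$, equivalently $\xi_1^{-1}(t)\le 1-t\le 1-e^{-1}t$. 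That gives the second half of the minimum.

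\textbf{The bound on $\xi_2^{-1}$.} For $2e^{-1}\le t\le 1$ I set $b=\sqrt{2\log t^{-1}+10(\log t^{-1})^{3/2}}$ and must show $\xi_2(b)\le t$, i.e. $e^{-b}(1+b)\le t$, i.e. $b-\log(1+b)\ge \log t^{-1}=:L\in[0,1-\log 2]$. Using $\log(1+b)\le b-b^2/2+b^3/3$ for $b\ge0$ it suffices to show $b^2/2-b^3/3\ge L$; since $b=\sqrt{2L+10L^{3/2}}$ we have $b^2/2=L+5L^{3/2}$, so it remains to check $5L^{3/2}\ge b^3/3$, i.e. $15L^{3/2}\ge (2L+10L^{3/2})^{3/2}$; on the relevant range $L\le 1-\log 2<0.31$ one has $2L+10L^{3/2}\le$ a constant small enough that the $3/2$ power is dominated — this is the one genuinely computational point and I expect it to be the main obstacle, requiring a careful numerical bound on the range $L\in(0,1-\log 2]$ (possibly splitting into $L$ small, where the cubic term is negligible, and $L$ near $1-\log 2$, where one just plugs in). For $0<t\le 2e^{-1}$ I set $b=\log t^{-1}+\log(1+4\log t^{-1})$ and must show $e^{-b}(1+b)\le t$; here $L=\log t^{-1}\ge 1-\log 2>0$, $e^{-b}=t/(1+4L)$, and $1+b=1+L+\log(1+4L)\le 1+4L$ needs $\log(1+4L)\le 3L$, true for all $L\ge0$ since $\log(1+4L)\le 4L$ and we actually need the slack... more carefully $1+L+\log(1+4L)\le 1+4L\iff \log(1+4L)\le 3L$, which holds for $L\ge 1-\log2$ because at $L=1-\log2$, $\log(1+4(1-\log2))=\log(5-4\log2)\approx\log 2.23\approx 0.80<3(0.31)=0.92$, and the difference $3L-\log(1+4L)$ is increasing in $L$. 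Then $e^{-b}(1+b)\le \frac{t}{1+4L}\cdot(1+4L)=t$, as required. Finally I would remark that the two branches are consistent at $t=2e^{-1}$ and that $b$ stays in $[0,\infty)$ throughout, completing the proof.
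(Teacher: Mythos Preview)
Your concavity argument for the bound $\xi_1^{-1}(t)\le 1-e^{-1}t$ is backwards. From $\xi_1(s)\ge 1-s$ (which is correct, by concavity) you get, for $y=\xi_1(s)$, that $y\ge 1-s$, i.e.\ $s\ge 1-y$; this is a \emph{lower} bound $\xi_1^{-1}(y)\ge 1-y$, not an upper bound. (Numerically: $\xi_1^{-1}(0.5)\approx 0.77>0.5$.) The paper's one-line fix is to use $e^t\le e$ on $[0,1]$, giving $\xi_1(t)\le e(1-t)$ and hence $\xi_1^{-1}(y)\le 1-e^{-1}y$.

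For the $\xi_2^{-1}$ bound on $2e^{-1}\le t\le 1$, your reduction to the sufficient condition $15L^{3/2}\ge(2L+10L^{3/2})^{3/2}$ actually fails near the top of the range: at $L=1-\log 2\approx 0.307$ the left side is about $2.55$ and the right about $3.5$. The original inequality $b-\log(1+b)\ge L$ still holds there (it gives roughly $0.60\ge 0.31$), so your Taylor-based sufficient condition is simply too lossy once $b$ exceeds $1$. The paper avoids this by running the argument the other way: from $z=t-\log(1+t)\ge t^2/2-t^3/3\ge t^2/6$ it obtains the crude bound $t\le\sqrt{6z}$, feeds that back into $t^3/3\le 2\sqrt 6\,z^{3/2}$, and concludes $t^2/2\le z+2\sqrt6\,z^{3/2}$, which is exactly the claimed estimate with $4\sqrt6<10$. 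This bootstrapping step is the idea you are missing. Your treatment of the case $0<t\le 2e^{-1}$ is fine and essentially matches the paper's.
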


\begin{proof}
The estimates for $\xi _{1}^{-1}$ follow since $\xi _{1}(t)\leq \min \left\{
1-t^{2}/2,e\left( 1-t\right) \right\} $. To estimate $\xi _{2}^{-1}$ we
re-write $y=e^{-t}\left( 1+t\right) $ as $z=t-\log \left( 1+t\right) $,
where $z=\log y^{-1}$. If $z<1-\log 2$ then $t<1$, since $t\mapsto t-\log
\left( 1+t\right) $ is strictly increasing. Since $\log
(1+t)=\sum_{1}^{\infty }\left( -1\right) ^{j+1}j^{-1}t^{j}$ is alternating,
with terms that decrease in absolute value, $z=t-\log \left( 1+t\right) \geq
t-\left( t-t^{2}/2+t^{3}/3\right) \geq t^{2}/6$. But then $z=t-\log \left(
1+t\right) \geq t-\left( t-t^{2}/2+t^{3}/3\right) $ and so $t^{2}/2\leq
z+t^{3}/3\leq z+2\sqrt{6}z^{3/2}$. If $z\geq 1-\log 2$ then $t\geq 1$ and $%
\log \left( 1+t\right) \leq t\log (2)$ so $z=t-\log \left( 1+t\right) \geq
\left( 1-\log (2)\right) t$ and $t\leq \left( 1-\log (2)\right) ^{-1}z$. But
then $t=z+\log \left( 1+t\right) \leq z+\log \left( 1+\left( 1-\log
(2)\right) ^{-1}z\right) $.
\end{proof}

\begin{lemma}
\label{concentration of order statistics}Let $\left( \gamma _{i}\right)
_{1}^{n}$ be an i.i.d. sample from $\left( 0,1\right) $ with corresponding
order statistics $\left( \gamma _{(i)}\right) _{1}^{n}$ and let $t>0$. With
probability at least $1-3^{-1}\pi ^{2}\exp \left( -t^{2}/2\right) $, the
following event occurs: for all $1\leq k\leq n$, $\gamma _{(k)}$ is bounded
above by both of the following quantities%
\begin{eqnarray}
&&\frac{k}{n+1}\left( 1+\xi _{2}^{-1}\left( \exp \left( \frac{-t^{2}-4\log k%
}{2k}\right) \right) \right)  \label{top order} \\
&&1-\frac{n-k+1}{n+1}\left( 1-\xi _{1}^{-1}\left( \exp \left( \frac{%
-t^{2}-4\log \left( n-k+1\right) }{2(n-k+1)}\right) \right) \right)
\label{bottom order}
\end{eqnarray}%
and with probability at least $1-C\exp \left( -t^{2}/2\right) $ the
following event occurs: for all $1\leq k\leq n$,%
\begin{eqnarray}
\gamma _{(k)} &\leq &1-\frac{n-k}{n}\exp \left( -c\max \left\{ \frac{\left(
t+\sqrt{\log k}\right) \sqrt{k}}{\sqrt{n\left( n-k+1\right) }},\frac{%
t^{2}+\log k}{n-k+1}\right\} \right)  \label{Renyi est} \\
&\leq &\frac{k}{n}+c\frac{n-k}{n}\max \left\{ \frac{\left( t+\sqrt{\log k}%
\right) \sqrt{k}}{\sqrt{n\left( n-k+1\right) }},\frac{t^{2}+\log k}{n-k+1}%
\right\}  \notag  \label{crude Renyi est}
\end{eqnarray}%
\end{lemma}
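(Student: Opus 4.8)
The plan is to control each order statistic $\gamma_{(k)}$ by comparing it to the Beta$(k,n-k+1)$ distribution it follows, but to avoid distributional computations I would instead use the two-sided exponential control implicit in the functions $\xi_1,\xi_2$ of (\ref{deviation fncts}). Recall that $\gamma_{(k)}\leq s$ fails exactly when fewer than $k$ of the $\gamma_i$ fall in $[0,s]$, i.e. when $\mathrm{Bin}(n,s)<k$. So the first task is a Chernoff bound: for a Binomial$(n,s)$ variable $B$ with mean $ns$, one has $\mathbb{P}\{B<k\}\leq \exp(-k\,\xi_?(\cdot))$-type estimates that, after the change of variables $s=\tfrac{k}{n+1}(1+u)$, reduce precisely to a statement of the form $\exp(k\ln\xi_2(u))\leq$ (target probability). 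Setting this target to $\exp(-(t^2+4\log k)/2)$ and solving for $u$ gives $u\leq \xi_2^{-1}\big(\exp((-t^2-4\log k)/(2k))\big)$, which is line (\ref{top order}). The estimate (\ref{bottom order}) is the mirror image: apply the same reasoning to $1-\gamma_{(k)}$, which is the $(n-k+1)$-st order statistic of the reflected sample $1-\gamma_i$, so $n-k+1$ replaces $k$ and the lower Chernoff bound brings in $\xi_1$ rather than $\xi_2$ (the convexity direction flips because we now need $\mathrm{Bin}$ to be unusually small on the \emph{upper} side, i.e. unusually large on the lower side, which is the regime where $\xi_1(t)=e^t(1-t)$ is the relevant rate function).

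Second, I would run a union bound over $k$. The weights $4\log k$ inside the exponentials are placed exactly so that $\sum_{k=1}^{n}\exp(-(t^2+4\log k)/2)=e^{-t^2/2}\sum_{k\geq 1}k^{-2}=\tfrac{\pi^2}{6}e^{-t^2/2}$, and since we are taking a union of the two bad events (the one coming from (\ref{top order}) and the one from (\ref{bottom order})) the total failure probability is at most $2\cdot\tfrac{\pi^2}{6}e^{-t^2/2}=\tfrac{\pi^2}{3}e^{-t^2/2}$, which is the claimed bound. One subtlety: for $k$ near $1$ or $n$ the ``$u\leq\xi_2^{-1}(\cdots)$'' inversion only makes sense when the argument of $\xi_2^{-1}$ lies in $(0,1]$, which it always does since $t^2+4\log k\geq 0$; and the trivial bound $\gamma_{(k)}\leq 1$ covers any degenerate case, so no edge case is lost.

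Third, for (\ref{Renyi est})--(\ref{crude Renyi est}) I would feed the explicit inverse-function estimates from Lemma \ref{numebecali} into (\ref{bottom order}). Writing $m=n-k+1$ and $z=(t^2+4\log(n-k))/(2m)$ (or the analogous quantity with $k$), Lemma \ref{numebecali} gives $1-\xi_1^{-1}(e^{-z})\geq c\min\{z,\sqrt z\}$ in the relevant range, hence $\xi_1^{-1}(e^{-z})\leq 1-c\min\{z,\sqrt z\}$; plugging this into (\ref{bottom order}) and simplifying $\tfrac{m}{n+1}\cdot c\min\{z,\sqrt z\}$ yields the $\exp(-c\max\{\cdots\})$ form after pushing the factor through $1-x\leq e^{-x}$ in reverse, i.e. $1-\tfrac{m}{n}(1-\delta)=\tfrac{k}{n}+\tfrac{m}{n}\delta$ with $\delta=c\min\{z,\sqrt z\}$, and then bounding $\delta$ from above by the displayed maximum using $\min\{z,\sqrt z\}\leq\sqrt z$ when $z\leq 1$ and $\leq z$ otherwise. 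The final inequality in (\ref{crude Renyi est}) is just $1-x\leq$ the linearization, i.e. $e^{-y}\geq 1-y$ applied to $y=\tfrac{m}{n}\delta$. The constants $C,c$ absorb the $\pi^2/3$ and the numerical constants ($e^{-1}$, $\sqrt 6$, $1-\log 2$) appearing in Lemma \ref{numebecali}.

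The main obstacle I anticipate is not the union bound or the Chernoff step — those are standard — but getting the \emph{direction} and \emph{regime} of the $\xi_1,\xi_2$ inversions exactly right near the two ends $k\approx 1$ and $k\approx n$, where $z$ is large and the relevant branch of $\xi_2^{-1}$ is the logarithmic one rather than the square-root one. Concretely, for $k$ close to $n$ (so $m=n-k+1$ small) the quantity $z$ can exceed $1$, forcing the $\log$-branch of $\xi_2^{-1}$ in (\ref{top order}), and one must check that $\tfrac{k}{n+1}\xi_2^{-1}(e^{-z})$ with $\xi_2^{-1}(e^{-z})\approx z+\log(1+4z)$ still gives a bound below $1$ (it does, because $\tfrac{k}{n+1}$ is close to $1$ and the correction is $O((\log m)/m)\cdot$(small)); conversely for (\ref{bottom order}) with $k$ small the argument of $\xi_1^{-1}$ is close to $1$ and one needs the sharp local behavior $\xi_1^{-1}(1-\epsilon)\approx\sqrt{2\epsilon}$ from Lemma \ref{numebecali}. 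Keeping these two asymptotic regimes consistent with the single unified $\max\{\cdot,\cdot\}$ expression in (\ref{Renyi est}) is the part that requires care; everything else is bookkeeping.
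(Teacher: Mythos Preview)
Your treatment of (\ref{top order}) and (\ref{bottom order}) is correct and is exactly what the paper does: a Chernoff bound for the binomial tail (the paper writes it as $\mathbb{P}\{\gamma_{(k)}\geq (k+s\sqrt{k})/(n+1)\}=\mathbb{P}\{\#((k+s\sqrt{k})/(n+1),1)\geq n-k+1\}$ and optimizes in $\lambda$), followed by the union bound $\sum_k k^{-2}e^{-t^2/2}\leq \tfrac{\pi^2}{6}e^{-t^2/2}$, once for each of the two displays, giving the stated $\tfrac{\pi^2}{3}e^{-t^2/2}$.

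Your derivation of (\ref{Renyi est}), however, has a genuine gap. You propose to obtain it by substituting the estimates of Lemma~\ref{numebecali} into (\ref{bottom order}), but (\ref{bottom order}) cannot produce the scaling that (\ref{Renyi est}) asserts. In (\ref{bottom order}) the argument of $\xi_1^{-1}$ involves $m=n-k+1$, so the sub-Gaussian fluctuation you extract is of order $(t+\sqrt{\log m})/\sqrt{m}$; for small $k$ this is roughly $t/\sqrt{n}$. By contrast, the first term inside the $\max$ in (\ref{Renyi est}) is $(t+\sqrt{\log k})\sqrt{k}/\sqrt{n(n-k+1)}$, which for small $k$ is roughly $t\sqrt{k}/n$ --- smaller by a factor $\sqrt{k/n}$. (Take $k=1$: (\ref{bottom order}) yields deviation $\sim t/\sqrt{n}$, while (\ref{Renyi est}) claims $\sim t/n$.) Likewise the $\log k$ appearing in (\ref{Renyi est}) is not the $\log(n-k+1)$ you have in hand. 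No amount of massaging with Lemma~\ref{numebecali} recovers this; the information is simply absent from (\ref{bottom order}).

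The paper obtains (\ref{Renyi est}) by an entirely separate argument: the R\'enyi representation
\[
-\log(1-\gamma_{(k)})=\sum_{j=1}^{k}\frac{Z_j}{n-j+1},
\]
with $(Z_j)$ i.i.d.\ standard exponentials, followed by the Bernstein-type bound (\ref{sum of exp dev bound}) for $\sum a_j(Z_j-1)$. Here $|a|^2=\sum_{j=1}^k (n-j+1)^{-2}\asymp k/(n(n-k+1))$ and $|a|_\infty=(n-k+1)^{-1}$, which is precisely what produces the two terms inside the $\max$ and explains why $\log k$ (from the union-bound weight) rather than $\log(n-k+1)$ appears. This representation is the missing idea; without it, (\ref{Renyi est}) does not follow from the first half of the lemma.
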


\begin{remark}
For $k\leq n/2$, (\ref{top order}) gives a typical deviation about the mean
at most $C\sqrt{k\log k}/n$ but breaks down as $t\rightarrow \infty $ and $%
n,k$ are fixed. For $k\geq n/2$ (\ref{bottom order}) gives a typical
deviation at most $C\sqrt{\left( n-k+1\right) \log \left( n-k+1\right) }/n$,
and remains non-trivial (i.e.\thinspace $<1$) for all $1\leq k\leq n$ as $%
t\rightarrow \infty $. For $k\leq n/2$ (\ref{Renyi est}) also gives a
typical deviation of $C\sqrt{k\log k}/n$ : it is not quite as precise as (%
\ref{top order}) (which includes the exact function $\xi _{2}$) for $%
0<t<t_{n,k}$ but eventually improves upon (\ref{top order}) and remains
non-trivial as $t\rightarrow \infty $.
\end{remark}

\begin{proof}[Proof of Lemma \ref{concentration of order statistics}]
If $B$ has a binomial distribution with parameters $(n,p)$, and $np\leq s<n$%
, then using the exponential moment method,%
\begin{equation*}
\mathbb{P}\left\{ B\geq s\right\} =\mathbb{P}\left\{ e^{\lambda B}\geq
e^{\lambda s}\right\} \leq e^{-\lambda s}\left( 1-p+pe^{\lambda }\right)
^{n}=\left( \frac{np}{s}\right) ^{s}\left( \frac{n-np}{n-s}\right) ^{n-s}
\end{equation*}%
See e.g. \cite[Ex. 2.11 p48]{BLMass}. Let $\#\left( E\right) $ denote the
number of $1\leq i\leq n$ such that $\gamma _{i}\in E$. Then (recycling the
variable $s$),%
\begin{eqnarray*}
&\mathbb{P}&\left\{ \gamma _{(k)}\geq \frac{k+s\sqrt{k}}{n+1}\right\}=%
\mathbb{P}\left\{ \#\left( \frac{k+s\sqrt{k}}{n+1},1\right) \geq
n-k+1\right\} \\
&\leq &\left( 1-\frac{s\sqrt{k}}{n-k+1}\right) ^{n-k+1}\left( 1+\frac{s}{%
\sqrt{k}}\right) ^{k-1}\left( \frac{k}{k-1}\right) ^{k-1}\div \left( \frac{%
n+1}{n}\right) ^{n} \\
&\leq &\left( \xi _{2}\left( \frac{s}{\sqrt{k}}\right) \right) ^{k}\leq 
\frac{\exp \left( -t^{2}/2\right) }{k^{2}}
\end{eqnarray*}%
provided%
\begin{equation*}
s\geq \sqrt{k}\xi _{2}^{-1}\left( k^{-2/k}\exp \left( \frac{-t^{2}}{2k}%
\right) \right)
\end{equation*}%
We then apply the union bound over all $1\leq k\leq n$. (\ref{bottom order})
follows the same lines:%
\begin{equation*}
\mathbb{P}\left\{ \gamma _{(k)}\geq \frac{k+s\sqrt{n-k+1}}{n+1}\right\} =%
\mathbb{P}\left\{ \#\left( \frac{k+s\sqrt{n-k+1}}{n+1},1\right) \geq
n-k+1\right\}
\end{equation*}%
To prove (\ref{Renyi est}), we make use of the R\'{e}nyi representation of
order statistics from the exponential distribution (which we heard of from 
\cite[Theorem 2.5]{BouThom}): there exist i.i.d. standard exponential random
variables $\left( Z_{j}\right) _{1}^{n}$ such that%
\begin{equation*}
-\log \left( 1-\gamma _{(k)}\right) =\sum_{j=1}^{k}\frac{Z_{j}}{n-j+1}
\end{equation*}%
(this is an easy consequence of the fact that for all $1\leq k\leq n$, the
order statistics $\left( \gamma _{(j)}\right) _{k+1}^{n}$ are (after being
re-scaled to fill $\left( 0,1\right) $) independent of $\left( \gamma
_{(j)}\right) _{1}^{k}$ and distributed as the order statistics from a
sample of size $n-k$. Thus we may write%
\begin{equation*}
1-\gamma _{(k)}=\left( 1-\gamma _{(1)}\right) \prod_{j=2}^{k}\left( 1-\gamma
_{(j)}\right) \left( 1-\gamma _{(j-1)}\right) ^{-1}
\end{equation*}%
which is the product of $k$ independent variables). Concentration of $\log
\left( 1-\gamma _{(k)}\right) ^{-1}$ about its mean (with probability $%
1-Ck^{-2}\exp \left( -t^{2}/2\right) $) can now be studied using the basic
estimate
\begin{equation}
\mathbb{P}\left\{\left\vert \sum_{j=1}^ka_j(Z_j-1) \right\vert>r\right\}\leq 2\exp\left(-c\min\left\{\left(\frac{r}{\left\vert a \right\vert}\right)^2,\frac{r}{\left\vert a \right\vert_\infty}\right\}\right)\label{sum of exp dev bound}
\end{equation}
valid for all $r>0$ and all $a\in\mathbb{R}^k$. (\ref{sum of exp dev bound}) is proved using the exponential moment method, see for example \cite[Ex. 2.27 p50]{BLMass}, or use \cite{Fr20}[Theorem 3]. The result can be
transferred back to $\gamma _{(k)}$ using the transformation $t\mapsto
1-\exp \left( -t\right) $.
\end{proof}

Recall the definition of the quantile function as a generalized inverse given above the statement of Theorem \ref{poly tails Lip}.

\begin{corollary}
\label{robust simplified bound for sum}Let $n\in \mathbb{N}$, $\lambda \in \left[ 2,\infty \right) $,
and let $(Y_{i})_{1}^{n}$
be an i.i.d. sequence of non-negative random variables, each with cumulative
distribution $F$, quantile function $F^{-1}$, and corresponding order statistics $\left( Y_{(i)}\right)
_{1}^{n}$. With probability at least $1-3^{-1}\pi ^{2}\exp \left( -\lambda
^{2}/2\right) $, the following event holds: for all $j,k\in\mathbb{Z}$ with $0\leq j\leq k<n$,
\begin{eqnarray*}
&&\sum_{i=n-k}^{n-j}Y_{(i)}\leq F^{-1}\left(1-\frac{j+1}{n+1}\left( 1-\xi _{1}^{-1}\left( \exp \left( \frac{%
-\lambda^{2}-4\log \left( j+1\right) }{2(j+1)}\right) \right) \right)\right)\\
&+&\left( n+1\right) \int_{(j+1)/\left( n+1\right) }^{(k+1)/\left( n+1\right) }F^{-1}\left(
1-t\left( 1-\xi _{1}^{-1}\left( \exp \left( \frac{-\lambda ^{2}-4\log \left(
(n+1)t\right) }{2\left( n+1\right) t}\right) \right) \right) \right) dt
\end{eqnarray*}%
\end{corollary}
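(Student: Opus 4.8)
The plan is to deduce Corollary \ref{robust simplified bound for sum} from the bound \eqref{bottom order} in Lemma \ref{concentration of order statistics}, applied to the sequence $\gamma_i = F(Y_i)$ (or a uniform coupling realizing the order statistics), using the fact that $F^{-1}$ is non-decreasing so that $Y_{(i)} \le F^{-1}(\gamma_{(i)})$ with equality when $F$ is continuous, and in general by the standard quantile coupling. First I would fix the favourable event of probability at least $1-3^{-1}\pi^2\exp(-\lambda^2/2)$ on which \eqref{bottom order} holds simultaneously for all $1\le k \le n$ with $t = \lambda$. Re-indexing by setting $k \mapsto n-m$ for $0 \le m < n$, \eqref{bottom order} says
\[
\gamma_{(n-m)} \le 1 - \frac{m+1}{n+1}\left(1 - \xi_1^{-1}\left(\exp\left(\frac{-\lambda^2 - 4\log(m+1)}{2(m+1)}\right)\right)\right),
\]
so that, writing $G(m)$ for the right-hand side, we have $Y_{(n-m)} \le F^{-1}(G(m))$ for each $m$ in the relevant range.

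Next I would sum this over $m = j, j+1, \dots, k$ to bound $\sum_{i=n-k}^{n-j} Y_{(i)} = \sum_{m=j}^{k} Y_{(n-m)} \le \sum_{m=j}^{k} F^{-1}(G(m))$. The point is to split off the largest term (the $m=j$ summand, which gives the stated $F^{-1}(1 - \tfrac{j+1}{n+1}(1-\xi_1^{-1}(\cdots)))$) and then dominate the remaining sum $\sum_{m=j+1}^{k} F^{-1}(G(m))$ by an integral. Since $F^{-1}$ is non-decreasing and $G$ is non-increasing in $m$ (both $\tfrac{m+1}{n+1}$ increases and the $\xi_1^{-1}$ term is being subtracted, so one must check the whole expression $1 - \tfrac{m+1}{n+1}(1-\xi_1^{-1}(\cdots))$ is monotone — this uses that $\xi_1^{-1}$ is non-decreasing while its argument $\exp(\tfrac{-\lambda^2-4\log(m+1)}{2(m+1)})$ is eventually increasing in $m$, and a short estimate handles the combined monotonicity), the function $m \mapsto F^{-1}(G(m))$ is non-increasing, so each term $F^{-1}(G(m))$ for $m \ge j+1$ is at most $\int_{m-1}^{m} F^{-1}(G(s))\,ds$, and hence $\sum_{m=j+1}^{k} F^{-1}(G(m)) \le \int_{j}^{k} F^{-1}(G(s))\,ds$. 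Finally I would change variables $t = (s+1)/(n+1)$, i.e. $s = (n+1)t - 1$, turning $\int_j^k F^{-1}(G(s))\,ds$ into $(n+1)\int_{(j+1)/(n+1)}^{(k+1)/(n+1)} F^{-1}\big(1 - t(1-\xi_1^{-1}(\exp(\tfrac{-\lambda^2 - 4\log((n+1)t)}{2(n+1)t})))\big)\,dt$, which is exactly the integral term in the statement.

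The main obstacle I anticipate is the monotonicity check needed to pass from the sum to the integral: one needs $m \mapsto F^{-1}(G(m))$ to be non-increasing (or at least to control it by the integral up to the already-extracted leading term), and while $F^{-1}$ is automatically monotone, the composite $G(m) = 1 - \tfrac{m+1}{n+1}(1 - \xi_1^{-1}(\exp(\tfrac{-\lambda^2-4\log(m+1)}{2(m+1)})))$ is not obviously monotone in $m$ because the small correction $\xi_1^{-1}(\cdots)$ competes with the linear factor. Here I would use Lemma \ref{numebecali}, specifically $\xi_1^{-1}(u) \le \min\{\sqrt{2(1-u)}, 1 - e^{-1}u\}$, to show that the correction term $\tfrac{m+1}{n+1}\xi_1^{-1}(\cdots)$ is small enough (of order $\sqrt{(m+1)\log(m+1)}/(n+1)$) that $G(m)$ decreases; alternatively, since we only need an upper bound on the sum, it suffices to note that the first term dominates and that $\sum_{m \ge j+1} F^{-1}(G(m))$ can be bounded by the integral over $[j, k]$ after verifying monotonicity on that range, handling any small initial non-monotone regime by absorbing it into the leading term. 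A secondary, purely bookkeeping obstacle is getting the index shifts and the factor $(n+1)$ exactly right in the change of variables, and confirming that the range $0 \le j \le k < n$ keeps all arguments of $\log$ and $\xi_1^{-1}$ in their valid domains (in particular $(n+1)t \ge j+1 \ge 1$ so $\log((n+1)t) \ge 0$, and the argument of $\xi_1^{-1}$ lies in $[0,1]$).
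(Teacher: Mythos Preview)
Your approach is essentially the paper's: couple $Y_{(i)}=F^{-1}(\gamma_{(i)})$ with uniform order statistics, apply \eqref{bottom order} with $t=\lambda$, split off the top term $Y_{(n-j)}$, dominate the remaining sum by an integral via monotonicity, and change variables $t=(s+1)/(n+1)$. The bookkeeping and the index/limit checks are exactly right.

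Your ``main obstacle'' is, however, self-inflicted by a sign slip: $\xi_1(t)=e^t(1-t)$ has derivative $-te^t\le 0$, so $\xi_1$ and hence $\xi_1^{-1}$ are \emph{decreasing} on $[0,1]$, not non-decreasing as you write. With this corrected, the monotonicity of $G$ is immediate, and this is precisely why the paper disposes of it in one sentence: the hypothesis $\lambda\ge 2$ gives $1-\lambda^2/4\le 0$, so $x\mapsto(\lambda^2+4\log x)/x$ is decreasing on all of $[1,\infty)$; thus the argument of $\xi_1^{-1}$ is increasing in $m$ for every $m\ge 0$ (not merely ``eventually''), $\xi_1^{-1}(\cdots)$ is decreasing, $\tfrac{m+1}{n+1}\bigl(1-\xi_1^{-1}(\cdots)\bigr)$ is a product of two non-negative increasing factors, and $G(m)$ is decreasing. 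No appeal to Lemma \ref{numebecali} and no ``absorb a non-monotone regime into the leading term'' device is needed.
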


\begin{proof}
Let  $\left( \gamma _{i}\right) _{1}^{n}$ be an i.i.d. sample from the uniform distribution on $(0,1)$. Since $\left(Y_{(i)}\right)_1^n$ has the same distribution as $\left(F^{-1}\left(
\gamma _{(i)}\right)\right)_1^n$ we may assume without loss of generality that $Y_{(i)}=F^{-1}\left(
\gamma _{(i)}\right) $. We now apply Lemma \ref{concentration of order statistics} to the random vector $\left( \gamma _{(i)}\right)
_{1}^{n}$. If $j=k$ we simply have one term. If $j<k$ write
\[
\sum_{i=n-k}^{n-j}Y_{(i)}=Y_{(n-j)}+\sum_{i=n-k}^{n-j-1}Y_{(i)}
\]
and compare the sum to an integral using right hand endpoints the fact that the integrand is decreasing. Here we also use the fact that $x\mapsto \left( \lambda ^{2}+4\log x\right) /x$ is decreasing provided $\log
x\geq 1-\lambda ^{2}/4$, and we have assumed that $\lambda \geq 2$.
\end{proof}

\begin{lemma}\label{replacio}
In Corollary \ref{robust simplified bound for sum}, we can replace the upper bound for $\sum_{i=n-k}^{n-j}Y_{(i)}$ with
\begin{eqnarray*}
&&F^{-1}\left(
1-\frac{j+1}{n+1}e^{-1}\exp \left( \frac{-\lambda ^{2}-4\log \left(
j+1\right) }{2\left(j+1\right)}\right) \right)\\
&+&\lambda^2\int_{\frac{2(j+1)}{\lambda^2}\exp\left(\frac{-\lambda^2}{2(j+1)}\right)}^{\frac{2(k+1)}{\lambda^2}\exp\left(\frac{-\lambda^2}{2(k+1)}\right)}F^{-1}\left(1-e^{-1-2/e}\frac{\lambda^2}{2(n+1)}z\right)\left\{1+\frac{1}{z}\left(\log\left(e+\frac{1}{z}\right)\right)^{-2}\right\}dz
\end{eqnarray*}
\end{lemma}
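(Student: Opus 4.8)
The plan is to start from the upper bound supplied by Corollary~\ref{robust simplified bound for sum} and, term by term, replace each ingredient by something cruder but cleaner, after which the integral term is rewritten by a change of variables. Throughout I use that $F^{-1}$ is non-decreasing and, since the $Y_i$ are non-negative, non-negative. For the boundary term, Lemma~\ref{numebecali} gives $\xi_1^{-1}(s)\le 1-e^{-1}s$ on $[0,1]$, hence $1-\xi_1^{-1}(s)\ge e^{-1}s$; since $\lambda\ge2$ and $j\ge0$ force the exponent below to be $\le0$, the argument $s=\exp\!\big(\tfrac{-\lambda^{2}-4\log(j+1)}{2(j+1)}\big)$ lies in $(0,1]$, and monotonicity of $F^{-1}$ yields
\[
F^{-1}\!\left(1-\frac{j+1}{n+1}\big(1-\xi_1^{-1}(s)\big)\right)\le F^{-1}\!\left(1-\frac{j+1}{n+1}e^{-1}s\right),
\]
which is precisely the first term of the claimed bound.

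For the integral term I would first substitute $u=(n+1)t$, turning the Corollary's integral into $\int_{j+1}^{k+1}F^{-1}\big(1-\tfrac{u}{n+1}(1-\xi_1^{-1}(s_u))\big)\,du$ with $s_u=\exp\!\big(\tfrac{-\lambda^{2}-4\log u}{2u}\big)=e^{-\lambda^{2}/(2u)}u^{-2/u}\in(0,1]$ for $u\ge j+1\ge1$. Applying $1-\xi_1^{-1}(s_u)\ge e^{-1}s_u$ once more, together with the elementary bound $u^{-2/u}=e^{-2(\log u)/u}\ge e^{-2/e}$ (since $(\log u)/u\le1/e$ for all $u>0$), gives $1-\xi_1^{-1}(s_u)\ge e^{-1-2/e}e^{-\lambda^{2}/(2u)}$, so by monotonicity the integrand is at most $F^{-1}\!\big(1-e^{-1-2/e}\tfrac{u}{n+1}e^{-\lambda^{2}/(2u)}\big)$.

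Now I substitute $z=z(u):=\tfrac{2u}{\lambda^{2}}e^{-\lambda^{2}/(2u)}$; one checks $z'(u)=e^{-\lambda^{2}/(2u)}\big(\tfrac{2}{\lambda^{2}}+\tfrac1u\big)>0$, so this is a legitimate change of variables carrying $[j+1,k+1]$ onto $\big[\tfrac{2(j+1)}{\lambda^{2}}e^{-\lambda^{2}/(2(j+1))},\,\tfrac{2(k+1)}{\lambda^{2}}e^{-\lambda^{2}/(2(k+1))}\big]$, exactly the interval in the statement. Since $\tfrac{u}{n+1}e^{-\lambda^{2}/(2u)}=\tfrac{\lambda^{2}}{2(n+1)}z$, the integrand already takes the desired shape $F^{-1}\big(1-e^{-1-2/e}\tfrac{\lambda^{2}}{2(n+1)}z\big)$, and using $e^{\lambda^{2}/(2u)}=\tfrac{2u}{\lambda^{2}z}$ the Jacobian is $\tfrac{du}{dz}=1/z'(u)=\tfrac{2u^{2}}{z(2u+\lambda^{2})}$. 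Because $F^{-1}\ge0$, it now suffices to dominate this Jacobian by $\lambda^{2}\{1+z^{-1}(\log(e+z^{-1}))^{-2}\}$. Writing $w=\tfrac{\lambda^{2}}{2u}>0$, so that $z=e^{-w}/w$, $z^{-1}=we^{w}$ and $\tfrac{2u^{2}}{2u+\lambda^{2}}=\tfrac{\lambda^{2}}{2w(1+w)}$, this reduces to the single-variable inequality
\[
\frac{1}{2w(1+w)}\le\frac{e^{-w}}{w}+\frac{1}{\big(\log(e+we^{w})\big)^{2}}\qquad(w>0).
\]

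This inequality is the technical heart and the step I expect to be the main obstacle. I would split on the sign of $e^{w}-2(1+w)$, whose only positive root is some $w_{0}>1$ (the map $w\mapsto2(1+w)-e^{w}$ equals $1$ at $0$, increases to $w=\log2$, then decreases to $-\infty$). For $0<w\le w_{0}$ we have $e^{-w}\ge\tfrac{1}{2(1+w)}$, so the first term on the right already dominates the left. For $w\ge w_{0}$ I would drop the first term and prove instead $\big(\log(e+we^{w})\big)^{2}\le2w(1+w)$; since $w>1$ gives $we^{w}\ge e$ we have $\log(e+we^{w})\le w+\log(2w)$, and $\big(w+\log(2w)\big)^{2}\le2w^{2}+2w$ follows from a short case analysis in $w$ (with comfortable slack except near $w_{0}$, where one compares directly). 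Combining the boundary estimate with the transformed integral estimate then gives the asserted replacement. Apart from this elementary-but-fiddly final step, the only real care needed is to keep the successive crude bounds quantitatively compatible, so that the $\lambda$-free profile $1+z^{-1}(\log(e+z^{-1}))^{-2}$ genuinely dominates $du/dz$ uniformly in $\lambda\ge2$ and $u\in[j+1,k+1]$.
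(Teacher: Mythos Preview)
Your proof is correct and follows essentially the same approach as the paper. Your substitution $z=\tfrac{2u}{\lambda^{2}}e^{-\lambda^{2}/(2u)}$ is the composition of the paper's two successive substitutions $s=\tfrac{\lambda^{2}}{2(n+1)t}$ and $z=s^{-1}e^{-s}$ (indeed your $w=\lambda^{2}/(2u)$ is the paper's $s$), and your Jacobian bound $\tfrac{e^{w}}{2(1+w)}\le 1+we^{w}\big(\log(e+we^{w})\big)^{-2}$ is exactly the paper's assertion that the constant $C_{0}:=\sup_{s>0}\tfrac{e^{s}}{1+s}\big[1+se^{s}(\log(e+se^{s}))^{-2}\big]^{-1}$ satisfies $C_{0}<2$, which the paper dispatches by a numerical check rather than your case split at $w_{0}$.
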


\begin{proof}
By Corollary \ref{robust simplified bound for sum} and Lemma \ref{numebecali}, $\sum_{i=n-k}^{n-j}Y_{(i)}$ is bounded above by
\begin{eqnarray*}
&&F^{-1}\left(
1-\frac{j+1}{n+1}e^{-1}\exp \left( \frac{-\lambda ^{2}-4\log \left(
j+1\right) }{2\left(j+1\right)}\right) \right)\\
&+&(n+1)\int_{(j+1)/\left( n+1\right) }^{(k+1)/\left( n+1\right) }F^{-1}\left(
1-e^{-1-2/e}t\exp \left( \frac{-\lambda ^{2}}{2\left( n+1\right) t}\right)
\right) dt
\end{eqnarray*}
Then set
\[
s=\frac{\lambda^2}{2(n+1)}t^{-1}
\]
and the integral becomes
\begin{equation}
\frac{\lambda^2}{2}\int_{\frac{\lambda^2}{2(k+1)}}^{\frac{\lambda^2}{2(j+1)}}F^{-1}\left(1-e^{-1-2/e}\frac{\lambda^2}{2(n+1)}s^{-1}e^{-s}\right)s^{-2}ds\label{int quantii}
\end{equation}
Setting $z=q(s)=s^{-1}e^{-s}$ and using $q'(s)=-q(s)\left(1+1/s\right)$,
\[
ds=\frac{dz}{q'\left(q^{-1}\left(z\right)\right)}=\frac{dz}{q'(s)}=\frac{dz}{-q(s)(1+s^{-1})}=\frac{-e^sdz}{s^{-1}(1+s^{-1})}
\]
The expression in (\ref{int quantii}) can then be written as
\begin{equation}
\frac{\lambda^2}{2}\int_{\frac{2(j+1)}{\lambda^2}\exp\left(\frac{-\lambda^2}{2(j+1)}\right)}^{\frac{2(k+1)}{\lambda^2}\exp\left(\frac{-\lambda^2}{2(k+1)}\right)}F^{-1}\left(1-e^{-1-2/e}\frac{\lambda^2}{2(n+1)}z\right)\frac{e^s}{1+s}dz
\end{equation}
We'd like to write $e^s/(1+s)$ as a function of $x$, or at least bound it above by such a function, and we start by estimating it in terms of $z=s^{-1}e^{-s}\in(0,\infty)$. When $z$ is small $s$ is large and $s\geq (1/2)\log z^{-1}$, and
\[
\frac{e^s}{1+s}\leq C\frac{1}{z}\left(\log\frac{1}{z}\right)^{-2}
\]
When $z$ is large $s$ is small and $e^s/(1+s)\leq C$. By continuity, for all $z\in(0,\infty)$,
\[
\frac{e^s}{1+s}\leq C\left\{1+\frac{1}{z}\left(\log\left(e+\frac{1}{z}\right)\right)^{-2}\right\}
\]
and we define
\[
C_0=\sup\left\{\frac{e^s}{1+s}\left[1+\frac{se^s}{\left(\log\left(e+se^s\right)\right)^{2}}\right]^{-1}:s\in(0,\infty)\right\}
\]
to be the smallest possible value of $C$. A numerical computation shows that $1<C_0<2$.
\end{proof}

\begin{lemma}\label{lemlem smalio}
For all $a,b\in(0,\infty)$ with $a\leq b$ and all $r\in\mathbb{R}$,
\begin{equation}
\int_a^bx^{-r}dx\leq C\min\left\{\left\vert 1-r\right\vert^{-1},\log \frac{b}{a}\right\}\left(a^{1-r}+b^{1-r}\right)\label{power interp}
\end{equation}
where we define $0^{-1}=\infty$. If $0<a<b<e^{-1}$ and $r>1$ then
\begin{eqnarray}
&&\int_a^bx^{-r}\left(\log\frac{1}{x}\right)^{-2}dx\leq C\min\left\{1,\log\frac{\log\frac{1}{a}}{\log\frac{1}{b}}\right\}\left(\log\frac{1}{b}\right)^{-1}\nonumber\\
&+&C\min\left\{(r-1)^{-1},\log \frac{b}{a}\right\}\left[(r-1)^{-1}+\log\frac{1}{a}\right]^{-2}a^{1-r}\label{power log intii}\\
&\leq& C_r\min\left\{1,\log \frac{b}{a}\right\}\left(\log\frac{1}{a}\right)^{-2}a^{1-r}\label{const r}
\end{eqnarray}
The inequalities in (\ref{power interp}) and (\ref{power log intii}) can be reversed by replacing $C$ with $c$, and (\ref{const r}) can be reversed by replacing $C_r$ with $c_r$.
\end{lemma}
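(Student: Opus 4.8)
The plan is to reduce each integral to a one--variable expression via a change of variable and then estimate it by elementary means. \emph{For (\ref{power interp}):} the substitution $x=ae^{u}$ rewrites $\int_{a}^{b}x^{-r}\,dx$ as $a^{1-r}\ell\,g\!\bigl((1-r)\ell\bigr)$, where $\ell=\log(b/a)$ and $g(t)=(e^{t}-1)/t$ with $g(0)=1$. Two elementary facts then finish the job: $c\,(1+e^{t})\min\{1,|t|^{-1}\}\le g(t)\le(1+e^{t})\min\{1,|t|^{-1}\}$ for every real $t$ (a short case analysis in the sign of $t$ and in whether $|t|\le 1$; the upper bound uses $e^{t}-1\le te^{t}$), together with the identities $a^{1-r}\bigl(1+e^{(1-r)\ell}\bigr)=a^{1-r}+b^{1-r}$ and $\ell\min\bigl\{1,|(1-r)\ell|^{-1}\bigr\}=\min\{|1-r|^{-1},\log(b/a)\}$. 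Substituting the first estimate into $a^{1-r}\ell\,g((1-r)\ell)$ and simplifying with the identities yields (\ref{power interp}) and its reversal with universal constants; $r=1$ is the limit $t\to0$.

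\emph{For (\ref{power log intii}):} set $w=\log(1/x)$. Since $b<e^{-1}$, we have $w>1$ on the range, and the integral becomes $J:=\int_{B}^{A}e^{\rho w}w^{-2}\,dw$ with $\rho=r-1>0$, $B=\log(1/b)$, $A=\log(1/a)$, $A>B>1$, and $\log(b/a)=A-B$. The target bound is $J\le C(T_{1}+T_{2})$ with $T_{1}=\min\{1,\log(A/B)\}B^{-1}$ and $T_{2}=\min\{\rho^{-1},A-B\}(\rho^{-1}+A)^{-2}e^{\rho A}$. I would split on the size of $\rho A$. If $\rho A\le 4$, then $e^{\rho w}\le e^{4}$ gives $J\le e^{4}(B^{-1}-A^{-1})=e^{4}B^{-1}(1-B/A)\le e^{4}T_{1}$, using $1-s\le\min\{1,\log(1/s)\}$. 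If $\rho A>4$ and $A-B\le\rho^{-1}$, then $B\ge A-\rho^{-1}>\tfrac{3}{4}A>2/\rho$, so $e^{\rho w}w^{-2}$ (whose derivative vanishes at $w=2/\rho$) is increasing on $[B,A]$, whence $J\le(A-B)e^{\rho A}A^{-2}\le\tfrac{25}{16}T_{2}$ since $\rho^{-1}+A\le\tfrac{5}{4}A$. If $\rho A>4$ and $A-B>\rho^{-1}$, split $[B,A]$ at $w_{0}=\max\{B,4/\rho\}$: on $[B,w_{0}]$ (empty when $B\ge 4/\rho$) one has $e^{\rho w}\le e^{4}$ and $\int_{B}^{w_{0}}e^{\rho w}w^{-2}\,dw\le e^{4}B^{-1}(1-\rho B/4)\le e^{4}T_{1}$ (since $4/\rho<A$); on $[w_{0},A]$ one has $\rho w\ge4$, and integrating by parts,
\[
\int_{w_{0}}^{A}e^{\rho w}w^{-2}\,dw=\rho^{-1}\bigl(e^{\rho A}A^{-2}-e^{\rho w_{0}}w_{0}^{-2}\bigr)+2\rho^{-1}\int_{w_{0}}^{A}e^{\rho w}w^{-3}\,dw\le\rho^{-1}e^{\rho A}A^{-2}+\tfrac{1}{2}\int_{w_{0}}^{A}e^{\rho w}w^{-2}\,dw
\]
because $2\rho^{-1}w_{0}^{-1}\le\tfrac{1}{2}$; rearranging gives $\int_{w_{0}}^{A}e^{\rho w}w^{-2}\,dw\le 2\rho^{-1}e^{\rho A}A^{-2}\le\tfrac{25}{8}T_{2}$. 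Collecting the cases gives (\ref{power log intii}) with $C=e^{4}$.

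The reversal of (\ref{power log intii}) is softer: $J\ge\int_{B}^{A}w^{-2}\,dw=B^{-1}(1-B/A)\ge(1-e^{-1})T_{1}$, while restricting $J$ to $[\,A-\min\{\rho^{-1},A-B\},\,A\,]$, on which $e^{\rho w}\ge e^{-1}e^{\rho A}$ and $w^{-2}\ge A^{-2}\ge(\rho^{-1}+A)^{-2}$, gives $J\ge e^{-1}T_{2}$; averaging yields $J\ge c(T_{1}+T_{2})$. The one genuinely delicate point is the upper bound for $J$: merely bounding the integrand by its maximum produces the crude factor $B^{-2}$ instead of the sharp $(\rho^{-1}+A)^{-2}$, so one must use monotonicity of $e^{\rho w}w^{-2}$ together with the fact that the integration by parts becomes self--improving exactly once $\rho w\ge4$; everything else is bookkeeping across the minima.

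Finally, (\ref{const r}) and its reversal follow from (\ref{power log intii}) by absorbing the $\rho$--dependence into the constant. Since $A\ge B\ge 1$ we have $\log(A/B)\le A-B=\log(b/a)$, so $\min\{1,\log(A/B)\}\le\min\{1,\log(b/a)\}$; since $B\ge1$ and $e^{\rho A}A^{-2}\ge e^{2}\rho^{2}/4$ (the minimum of $A\mapsto e^{\rho A}A^{-2}$, attained at $A=2/\rho$), we get $B^{-1}\le 4e^{-2}\rho^{-2}e^{\rho A}A^{-2}$; and $(\rho^{-1}+A)^{-2}\le A^{-2}$ together with $\min\{\rho^{-1},A-B\}\le\max\{1,\rho^{-1}\}\min\{1,A-B\}$ handles $T_{2}$. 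Hence $T_{1}+T_{2}\le C_{r}\min\{1,A-B\}A^{-2}e^{\rho A}=C_{r}\min\{1,\log(b/a)\}(\log(1/a))^{-2}a^{1-r}$. For the reverse inequality one keeps only $T_{2}$ and uses $(\rho^{-1}+A)^{-2}\ge(1+\rho^{-1})^{-2}A^{-2}$ (valid since $A\ge1$) and $\min\{\rho^{-1},A-B\}\ge\min\{1,\rho^{-1}\}\min\{1,A-B\}$.
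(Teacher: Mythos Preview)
Your proof is correct. The overall strategy---reduce to a one-variable integral by a logarithmic substitution and then estimate---matches the paper, but the execution differs in a genuine way.

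For (\ref{power interp}) you give a clean self-contained argument via $x=ae^{u}$ and the two-sided estimate $g(t)=(e^{t}-1)/t\asymp(1+e^{t})\min\{1,|t|^{-1}\}$, whereas the paper simply cites an external lemma. Your version is more informative and costs nothing extra.

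For (\ref{power log intii}) both you and the paper land on the integral $\int e^{\rho w}w^{-2}\,dw$ (the paper rescales further to $t=\rho w$). The paper then uses the pointwise decomposition $t^{-2}e^{t}\asymp t^{-2}+(1+t)^{-2}e^{1+t}$: the first summand is fed back into (\ref{power interp}) to produce the $T_{1}$ term, and the second is handled by the general principle that an integrand with exponential growth rate bounded between positive constants has integral comparable to its right-endpoint value times $\min\{1,\text{length}\}$, which yields $T_{2}$. You instead do a three-way case split on $\rho A$ and $A-B$, using monotonicity of $e^{\rho w}w^{-2}$ on $\{w>2/\rho\}$ together with an integration-by-parts absorption trick (the condition $\rho w\ge 4$ makes the remainder at most half the original). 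Both routes are short; the paper's is more modular and reuses (\ref{power interp}), while yours is fully explicit with concrete constants and avoids any appeal to a separate growth-rate lemma. Your derivation of (\ref{const r}) from (\ref{power log intii}) is also more detailed than the paper's, which only says ``absorb the $\rho$-dependence into the constant''.
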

\begin{proof}
Assume without loss of generality that $a<b$. (\ref{power interp}) is Lemma 3 in \cite{Fr22} without the restriction that $a=1$, and it follows from that lemma by a change of variables. For (\ref{power log intii}), set $t=(r-1)\log(1/x)$, so the integral becomes
\begin{eqnarray}
(r-1)\int_{(r-1)\log(1/b)}^{(r-1)\log(1/a)}t^{-2}e^{t}dt
\end{eqnarray}
Now $t^{-2}e^{t}$ is the same order of magnitude as $t^{-2}+(1+t)^{-2}e^{1+t}$, which can be checked seperately for $t\leq 1$ and $t>1$. To integrate the first term of this integrand use (\ref{power interp}) with $2$ in place of $r$. To integrate the second term set $u=t+1$ and note that the resulting integrand is the same order of magnitude as a function that has an instantaneous exponential growth rate that is bounded above and below by universal constants, i.e.
\[
C(r-1)\int_{(r-1)\log(1/b)+1}^{(r-1)\log(1/a)+1}u^{-2}e^udu\\
\leq C(r-1)\int_{(r-1)\log(1/b)+1}^{(r-1)\log(1/a)+1}(u+2)^{-2}e^udu
\]
and for all $u\in[1,\infty)$,
\[
\frac{2}{3}\leq\frac{d}{du}\ln\left((u+2)^{-2}e^u\right)\leq 1
\]
so the integral is the same order of magnitude as
\begin{eqnarray*}
C(r-1)\min\left\{1,(r-1)\log \frac{b}{a}\right\}\left[(r-1)\log\frac{1}{a}+3\right]^{-2}\exp\left((r-1)\log(1/a)+1\right)
\end{eqnarray*}
\end{proof}

\begin{proposition}\label{final productiones}
Consider the setting and assumptions of Corollary \ref{robust simplified bound for sum} and assume, in addition, that $p>0$, $T\geq 1$ and that for all $\delta,x\in(0,1)$,
\begin{equation}
H^*(\delta x)\geq T^{-1} \delta^{-1/p}H^*(x)\label{condit fast grw}
\end{equation}
where $H^*(x)=F^{-1}(1-x)$. Then the upper bound for $\sum_{i=n-k}^{n-j}Y_{(i)}$ can be replaced with
\begin{eqnarray*}
\left[1+T\lambda^2A\right]H^*\left(e^{-1-2/e}\frac{j+1}{n+1}\exp\left(\frac{-\lambda^2}{2(j+1)}\right)\right)+Cn\int_{\frac{j+1}{n+1}\exp\left(\frac{-\lambda^2}{2(j+1)}-1-2/e\right)}^{\frac{k+1}{n+1}\exp\left(\frac{-\lambda^2}{2(k+1)}-1-2/e\right)}H^*(x)dx
\end{eqnarray*}
where $A=0$ if $\lambda^2/2\leq j+1$ and $A$ equals
\begin{eqnarray*}
&&C^{1+1/p}\min\left\{p,\lambda^2\left(\frac{1}{j+1}-\frac{1}{\min\left\{\lambda^2/2,k+1\right\}}\right)\right\}\left(p+1+\frac{\lambda^2}{j+1}\right)^{-2}\\
&&+C\min\left\{1,\log\frac{\min\left\{k+1,\lambda^2/2\right\}}{j+1}\right\}\left[\frac{\lambda^2}{2(j+1)}\exp\left(\frac{\lambda^2}{2(j+1)}\right)\right]^{-1/p}\left[1+\frac{\lambda^2}{k+1}\right]^{-1}
\end{eqnarray*}
if $\lambda^2/2>j+1$.
\end{proposition}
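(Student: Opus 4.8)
The plan is to take the upper bound of Lemma \ref{replacio} and reshape it, writing $H^*(x)=F^{-1}(1-x)$ throughout and abbreviating $x_1=e^{-1-2/e}\tfrac{j+1}{n+1}\exp(\tfrac{-\lambda^2}{2(j+1)})$ and $x_2=e^{-1-2/e}\tfrac{k+1}{n+1}\exp(\tfrac{-\lambda^2}{2(k+1)})$, which are precisely the arguments of $H^*$ and the limits of the integral in the asserted bound. For the first term of Lemma \ref{replacio}: since $\log m\le m/e$ for $m\ge1$ we have $(j+1)^{-2/(j+1)}\ge e^{-2/e}$, so the argument of $F^{-1}$ there is at least $x_1$; as $H^*$ is non-increasing, this term is at most $H^*(x_1)\le[1+T\lambda^2A]H^*(x_1)$.

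For the integral in Lemma \ref{replacio}, substitute $x=e^{-1-2/e}\tfrac{\lambda^2}{2(n+1)}z$, which sends the endpoints to $x_1,x_2$, turns $\lambda^2\,dz$ into $2e^{1+2/e}(n+1)\,dx$ and $\lambda^2 z^{-1}dz$ into $\lambda^2 x^{-1}dx$, and split $\{1+z^{-1}(\log(e+z^{-1}))^{-2}\}$ into two terms. The ``$1$'' produces $2e^{1+2/e}(n+1)\int_{x_1}^{x_2}H^*(x)\,dx\le Cn\int_{x_1}^{x_2}H^*$, the integral term in the claim. For the other piece, $\int_{x_1}^{x_2}H^*(x)\tfrac{\lambda^2}{x}(\log(e+z(x)^{-1}))^{-2}dx$ with $z(x)=\tfrac{2(n+1)}{\lambda^2}e^{1+2/e}x$, split the range at $x^*=\tfrac{\lambda^2}{2e^{2+2/e}(n+1)}$, the point where $z(x^*)=e^{-1}$. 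On $x\ge x^*$ one has $z(x)\ge e^{-1}$, hence $(\log(e+z(x)^{-1}))^{-2}\le1$ and $\tfrac{\lambda^2}{x}\le\tfrac{\lambda^2}{x^*}\le Cn$, so that part is again at most $Cn\int_{x_1}^{x_2}H^*$ and is absorbed. The map $x\mapsto z$ sends $x_1$ to $z_1=\tfrac{2(j+1)}{\lambda^2}\exp(\tfrac{-\lambda^2}{2(j+1)})$, and an elementary calculation ($\tfrac1u e^{-u}\ge e^{-1}\Leftrightarrow u\le1$, with $u=\tfrac{\lambda^2}{2(j+1)}$) shows $z_1\ge e^{-1}\Leftrightarrow\lambda^2/2\le j+1$; in that case $x_1\ge x^*$, nothing remains below the threshold, and we may take $A=0$.

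When $\lambda^2/2>j+1$, going back to the variable $z$ and using $\log(e+z^{-1})\asymp\log z^{-1}$ for $z<e^{-1}$, there remains, up to a universal constant, $\lambda^2\int_{z_1}^{b}H^*(y(z))z^{-1}(\log z^{-1})^{-2}dz$, where $y(z)=e^{-1-2/e}\tfrac{\lambda^2}{2(n+1)}z$ (so $y(z_1)=x_1$) and $b=\min\{z_2,e^{-1}\}=\tfrac{2m^*}{\lambda^2}\exp(\tfrac{-\lambda^2}{2m^*})$ with $z_2=\tfrac{2(k+1)}{\lambda^2}\exp(\tfrac{-\lambda^2}{2(k+1)})$ and $m^*=\min\{k+1,\lambda^2/2\}$. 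Writing $x_1=(z_1/z)y(z)$ with $z_1/z\in(0,1]$ and $y(z)\in(0,1)$, hypothesis (\ref{condit fast grw}) gives $H^*(y(z))\le T(z_1/z)^{1/p}H^*(x_1)$, so after pulling out $TH^*(x_1)$ we are left with $\lambda^2 z_1^{1/p}\int_{z_1}^{b}z^{-1-1/p}(\log z^{-1})^{-2}dz$. This is estimated via Lemma \ref{lemlem smalio}, inequality (\ref{power log intii}), with $a=z_1$, this $b$, and $r=1+1/p$ (both endpoints lie in $(0,e^{-1})$, perturbing off $e^{-1}$ if needed).

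It then remains to match the output to the two terms of $A$, which is the only genuinely delicate step. This uses the elementary asymptotics $\log z_1^{-1}=\tfrac{\lambda^2}{2(j+1)}+\log\tfrac{\lambda^2}{2(j+1)}\asymp\tfrac{\lambda^2}{j+1}$, hence $[p+\log z_1^{-1}]^{-2}\asymp(p+1+\tfrac{\lambda^2}{j+1})^{-2}$ (using $\tfrac{\lambda^2}{2(j+1)}\ge1$); $\log b^{-1}=\tfrac{\lambda^2}{2m^*}+\log\tfrac{\lambda^2}{2m^*}\asymp1+\tfrac{\lambda^2}{k+1}$; $\log(b/z_1)=\log\tfrac{m^*}{j+1}+\tfrac{\lambda^2}{2}(\tfrac{1}{j+1}-\tfrac{1}{m^*})\asymp\lambda^2(\tfrac{1}{j+1}-\tfrac{1}{m^*})$ (since $\tfrac{\lambda^2}{2m^*}\ge1$ forces $\log\tfrac{m^*}{j+1}\le\tfrac{\lambda^2}{2}(\tfrac{1}{j+1}-\tfrac{1}{m^*})$); $\log\tfrac{\log z_1^{-1}}{\log b^{-1}}\asymp\log\tfrac{m^*}{j+1}$; and $z_1^{1/p}=[\tfrac{\lambda^2}{2(j+1)}\exp(\tfrac{\lambda^2}{2(j+1)})]^{-1/p}$, together with $z_1^{1/p}z_1^{1-r}=z_1^{1/p}z_1^{-1/p}=1$ for the second term coming from Lemma \ref{lemlem smalio}. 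Choosing the universal constants in the definition of $A$ generously — the exponent $1+1/p$ accommodating $p$-th roots of universal constants generated along the way — this gives $\lambda^2 z_1^{1/p}\int_{z_1}^{b}z^{-1-1/p}(\log z^{-1})^{-2}dz\le\lambda^2A$, so the ``non-$1$'' piece is at most $T\lambda^2A\,H^*(x_1)$; summing the three contributions (first term $\le H^*(x_1)$, the ``$1$'' and overshoot parts $\le Cn\int_{x_1}^{x_2}H^*$) yields the stated bound.
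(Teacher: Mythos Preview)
Your proof is correct and follows essentially the same approach as the paper's: split the bound from Lemma~\ref{replacio} into the leading $H^*$ term, the ``$1$'' part of the bracket (giving the $Cn\int H^*$ term after substitution), and the $z^{-1}(\log(e+z^{-1}))^{-2}$ part restricted to $z<e^{-1}$, to which one applies the growth hypothesis~(\ref{condit fast grw}) and then Lemma~\ref{lemlem smalio} with $r=1+1/p$. Your asymptotic bookkeeping in the final paragraph compresses what the paper does via its two auxiliary Claims, but the content is the same; the only cosmetic difference is that you absorb the $z\ge e^{-1}$ overshoot into the $Cn\int H^*$ term via the explicit bound $\lambda^2/x\le\lambda^2/x^*\le Cn$, whereas the paper simply remarks that the coefficient is $O(1)$ there and drops it.
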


\begin{proof}
By Lemma \ref{replacio} and assumption (\ref{condit fast grw}), $\sum_{i=n-k}^{n-j}Y_{(i)}$ is (with the required probability) at most $I+II+III$, where
\begin{eqnarray*}
I&=&H^*\left(
\frac{j+1}{n+1}e^{-1}\exp \left( \frac{-\lambda ^{2}-4\log \left(
j+1\right) }{2\left(j+1\right)}\right) \right)\\
II&=&C\lambda^2\int_{\frac{2(j+1)}{\lambda^2}\exp\left(\frac{-\lambda^2}{2(j+1)}\right)}^{\frac{2(k+1)}{\lambda^2}\exp\left(\frac{-\lambda^2}{2(k+1)}\right)}H^*\left(e^{-1-2/e}\frac{\lambda^2}{2(n+1)}z\right)dz\\
&\leq&Cn\int_{\frac{j+1}{n+1}\exp\left(\frac{-\lambda^2}{2(j+1)}-1-2/e\right)}^{\frac{k+1}{n+1}\exp\left(\frac{-\lambda^2}{2(k+1)}-1-2/e\right)}H^*\left(x\right)dx
\end{eqnarray*}
and $III$ is the product of
\begin{eqnarray}
C\lambda^2TH^*\left(e^{-1-2/e}\frac{j+1}{n+1}\exp\left(\frac{-\lambda^2}{2(j+1)}\right)\right)\label{firs threi}
\end{eqnarray}
and
\begin{eqnarray*}
\int_{\min\left\{e^{-1},\frac{2(j+1)}{\lambda^2}\exp\left(\frac{-\lambda^2}{2(j+1)}\right)\right\}}^{\min\left\{e^{-1},\frac{2(k+1)}{\lambda^2}\exp\left(\frac{-\lambda^2}{2(k+1)}\right)\right\}}\left(\frac{2(j+1)}{\lambda^2}\exp\left(\frac{-\lambda^2}{2(j+1)}\right)z^{-1}\right)^{1/p}\left\{\frac{1}{z}\left(\log\left(e+\frac{1}{z}\right)\right)^{-2}\right\}dz
\end{eqnarray*}
Here what we are doing is taking the term $1+\frac{1}{z}\left(\log\left(e+\frac{1}{z}\right)\right)^{-2}$ which appears in Lemma \ref{replacio} and expressing the corresponding integral as a sum of terms, one with coefficient $1$ and another with coefficient $\frac{1}{z}\left(\log\left(e+\frac{1}{z}\right)\right)^{-2}$. The second term only comes into play when the coefficient is at least $c$, so we may restrict the integral in $III$ to values of $z$ in $(0,e^{-1})$. By Lemma \ref{lemlem smalio} we can bound the integral in $III$ above by the sum of
\begin{eqnarray*}
&&C^{1+1/p}\min\left\{p,\log\frac{\max\left\{e,\frac{\lambda^2}{2(j+1)}\exp\left(\frac{\lambda^2}{2(j+1)}\right)\right\}}{\max\left\{e,\frac{\lambda^2}{2(k+1)}\exp\left(\frac{\lambda^2}{2(k+1)}\right)\right\}}\right\}\left(p+1+\frac{\lambda^2}{2(j+1)}\right)^{-2}\\
&&\times\left[1+\frac{2(j+1)}{\lambda^2}\exp\left(\frac{-\lambda^2}{2(j+1)}\right)\right]^{1/p}
\end{eqnarray*}
and
\begin{eqnarray*}
&&C\left[\frac{\lambda^2}{2(j+1)}\exp\left(\frac{\lambda^2}{2(j+1)}\right)\right]^{-1/p}\left[\max\left\{1,\log\left[\frac{\lambda^2}{2(k+1)}\exp\left(\frac{\lambda^2}{2(k+1)}\right)\right]\right\}\right]^{-1}\\
&&\times\min\left\{1,\log\frac{\max\left\{1,\log\left[\frac{\lambda^2}{2(j+1)}\exp\left(\frac{\lambda^2}{2(j+1)}\right)\right]\right\}}{\max\left\{1,\log\left[\frac{\lambda^2}{2(k+1)}\exp\left(\frac{\lambda^2}{2(k+1)}\right)\right]\right\}}\right\}
\end{eqnarray*}
Unless $\lambda^2/(2(j+1))\geq 1$, the integral in $III$ is zero because the interval of integration has length zero. So while bounding $III$ we assume this is the case, and this allows for simplification. By considering the cases $\lambda^2\leq 2(k+1)$ and $\lambda^2> 2(k+1)$ separately,
\begin{eqnarray*}
&&\min\left\{p,\log\frac{\max\left\{e,\frac{\lambda^2}{2(j+1)}\exp\left(\frac{\lambda^2}{2(j+1)}\right)\right\}}{\max\left\{e,\frac{\lambda^2}{2(k+1)}\exp\left(\frac{\lambda^2}{2(k+1)}\right)\right\}}\right\}\\
&\leq& C\min\left\{p,\lambda^2\left(\frac{1}{j+1}-\frac{1}{\min\left\{\lambda^2/2,k+1\right\}}\right)+\log\frac{\min\left\{\lambda^2/2,k+1\right\}}{j+1}\right\}\\
&\leq& C\min\left\{p,\lambda^2\left(\frac{1}{j+1}-\frac{1}{\min\left\{\lambda^2/2,k+1\right\}}\right)\right\}
\end{eqnarray*}
Here we have used the fact that because the logarithm is $1$-Lipschitz on $[1,\infty)$, for all $a,b\in[1,\infty)$ with $a\leq b$, $\log(1/a)-\log(1/b)\leq 1/a-1/b$. We now prove two claims which help simplify another term.

\underline{Claim:} for all $a,b\in[1,\infty)$ such that $a<b$ and $a\leq e+1$ (say),
\[
c\min\left\{\frac{b-a}{a},\log b\right\}\leq\log b -\log a \leq \min\left\{\frac{b-a}{a},\log b\right\}
\]
\underline{Proof of Claim:} The upper bound holds because the derivative of $\log$ is decreasing and because $\log a\geq 0$. For the lower bound, note that $a^2\leq (e+1)a$, so either $b\leq (e+1)a$ or $b>a^2$. If $b\leq (e+1)a$ then (by considering the derivative)
\[
\log b -\log a\geq\frac{b-a}{b}\geq C\frac{b-a}{a}
\]
and if $b>a^2$ then
\[
\log b -\log a=(1/2)\log b
\]
\underline{Claim:} for all $s,t\in[1,\infty)$ with $s<t$,
\[
\log\frac{t+\log t}{s+\log s}\leq C \log\frac{t}{s}
\]
\underline{Proof of Claim:} This is certainly true when $s\geq e$, because $x\mapsto (\log x)/x$ is decreasing on $(e,\infty)$, which implies the desired inequality with $C=1$. For $s<e$, apply the first claim twice to get
\[
\log\frac{t+\log t}{s+\log s}\leq C \min\left\{\frac{(t-s)+\log t-\log s}{s+\log s},\log(t+\log t)\right\}\leq C \min\left\{\frac{t-s}{s},\log(t)\right\}
\]
which proves the second claim.

We now find a simplified upper bound for
\begin{eqnarray*}
\min\left\{1,\log\frac{\max\left\{1,\log\left[\frac{\lambda^2}{2(j+1)}\exp\left(\frac{\lambda^2}{2(j+1)}\right)\right]\right\}}{\max\left\{1,\log\left[\frac{\lambda^2}{2(k+1)}\exp\left(\frac{\lambda^2}{2(k+1)}\right)\right]\right\}}\right\}
\end{eqnarray*}
When $\lambda^2\geq 2(k+1)$ and when $\lambda^2< 2(k+1)$ we get (respectively) as upper bounds using the second claim and using $\log\log (te^t)\leq C\log t$ for $t\geq 1$,
\[
\min\left\{1,\log\frac{k+1}{j+1}\right\}\hspace{2cm}C\min\left\{1,\log\frac{\lambda^2}{2(j+1)}\right\}
\]
In either case, we have the following upper bound
\[
C\min\left\{1,\log\frac{\min\left\{k+1,\lambda^2/2\right\}}{j+1}\right\}=C\log\frac{\min\left\{e(j+1),k+1,\lambda^2/2\right\}}{j+1}
\]
The result of these simplifications is that the upper bound for the integral in $III$ reduces to the quantity $A$ as defined in the statement of the result.
\end{proof}

\begin{remark}\label{remarkiou}
If in Proposition \ref{final productiones} we set $j=0$, $k=n-1$ and assume that $p>1$ and $T\leq C$ for any desired constant $C\geq 1$, the bound on $\sum_{i=1}^{n}Y_{(i)}=\sum_{i=1}^{n}Y_{i}$ can be replaced with
\begin{eqnarray*}
C\left(1+\lambda^{-2/p}e^{-\lambda^2/(2p)}\min\{\lambda^2,n\}\right)H^*\left(e^{-1-2/e}\frac{1}{n+1}\exp\left(\frac{-\lambda^2}{2}\right)\right)+Cn\mathbb{E}Y_1
\end{eqnarray*}
One also has
\[
1+\lambda^{-2/p}e^{-\lambda^2/(2p)}\min\{\lambda^2,n\}\leq Cp
\]
\end{remark}

\begin{proof}
When bounding $A$ we may assume without loss of generality that $A\neq0$. Distribute $\lambda^2$ into $A$, bound the two minima in $A$ by $p$ and $1$ respectively and use
\[
\frac{\lambda^2p}{\left(p+1+\lambda^2\right)^2}\leq 1 \hspace{1cm} \text{and}\hspace{1cm} \lambda^{2-2/p}\exp\left(-\frac{\lambda^2}{2p}\right)\leq Cp
\]
To bound the coefficient as in the statement, use $\min\{\lambda^2,n\}\leq \lambda^2$ and optimize.
\end{proof}

\begin{corollary}\label{eg sum poly tail}
Let $p>1$, $n\in \mathbb{N}$, $\lambda>0$, and let $(Y_{i})_{1}^{n}$
be an i.i.d. sequence of non-negative random variables, each with cumulative
distribution $F(x)=\min\{1,x^{-p}\}$, quantile function $F^{-1}(x)=x^{-1/p}$, and corresponding order statistics $\left( Y_{(i)}\right)
_{1}^{n}$. With probability at least $1-C\exp \left( -\lambda
^{2}/2\right)$, the following event occurs: for all $j\in\mathbb{Z}$ with $0\leq j\leq n/2$, $\sum_{i=1}^{n-j}Y_{(i)}$ is bounded above by
\[
\frac{Cpn}{p-1}+C\left(1+(j+1)\min\left\{\left(\frac{\lambda^2}{p(j+1)}\right)^2,\left(\frac{\lambda^2}{p(j+1)}\right)^{-1}\right\}\right)\left(\frac{n}{j+1}\right)^{1/p}\exp\left(\frac{\lambda^2}{2p(j+1)}\right)
\]
\end{corollary}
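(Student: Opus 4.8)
The plan is to apply Proposition \ref{final productiones} with $k=n-1$, so that $\sum_{i=1}^{n-j}Y_{(i)}=\sum_{i=n-k}^{n-j}Y_{(i)}$ and the hypothesis $0\le j\le k<n$ is met because $j\le n/2\le n-1$. The relevant quantile function is $H^{*}(x)=F^{-1}(1-x)=x^{-1/p}$, which obeys $H^{*}(\delta x)=(\delta x)^{-1/p}=\delta^{-1/p}H^{*}(x)$, so assumption (\ref{condit fast grw}) holds with equality and $T=1$, and the proposition applies. One may also assume $\lambda\ge 2$: this is required by Corollary \ref{robust simplified bound for sum}, which underlies the proposition, and for $0<\lambda<2$ the asserted probability bound $1-Ce^{-\lambda^{2}/2}$ is already $\le 0$ once $C\ge e^{2}$, so nothing needs to be proved there.

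Next one substitutes $H^{*}(x)=x^{-1/p}$ into the bound of Proposition \ref{final productiones}. Since $p>1$ we have $1-1/p\in(0,1)$, and the integral term is
\[
Cn\int_{\alpha}^{\beta}x^{-1/p}\,dx=\frac{Cpn}{p-1}\bigl(\beta^{1-1/p}-\alpha^{1-1/p}\bigr)\le\frac{Cpn}{p-1},
\]
because the upper endpoint $\beta=\tfrac{n}{n+1}\exp\!\bigl(\tfrac{-\lambda^{2}}{2n}-1-\tfrac{2}{e}\bigr)$ is below $e^{-1-2/e}<1$; this produces the first summand of the corollary. The leading term of the proposition is $\bigl(1+\lambda^{2}A\bigr)H^{*}(\alpha_{0})$ where $\alpha_{0}=e^{-1-2/e}\tfrac{j+1}{n+1}\exp\!\bigl(\tfrac{-\lambda^{2}}{2(j+1)}\bigr)$, and $H^{*}(\alpha_{0})=e^{(1+2/e)/p}\bigl(\tfrac{n+1}{j+1}\bigr)^{1/p}\exp\!\bigl(\tfrac{\lambda^{2}}{2p(j+1)}\bigr)$, which is comparable, up to an absolute constant, to $\bigl(\tfrac{n}{j+1}\bigr)^{1/p}\exp\!\bigl(\tfrac{\lambda^{2}}{2p(j+1)}\bigr)$. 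It therefore remains to bound $1+\lambda^{2}A$ by $C\bigl(1+(j+1)\min\{(\tfrac{\lambda^{2}}{p(j+1)})^{2},(\tfrac{\lambda^{2}}{p(j+1)})^{-1}\}\bigr)$, except that the contribution of the second summand of $A$ will be absorbed instead into the $\tfrac{Cpn}{p-1}$ term.

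Put $m=j+1$ and $v=\tfrac{\lambda^{2}}{2m}$. If $v\le 1$ then $A=0$ and there is nothing to do, so assume $v>1$ and write $A=A_{1}+A_{2}$ for its two summands. Since $C^{1+1/p}\le C^{2}$ is an absolute constant, and since in each of the cases $\lambda^{2}/2\le k+1$ and $\lambda^{2}/2>k+1$ the quantity $\lambda^{2}\bigl(\tfrac{1}{j+1}-\tfrac{1}{\min\{\lambda^{2}/2,\,k+1\}}\bigr)$ is nonnegative and at most $\lambda^{2}/(j+1)=2v$, one gets $\lambda^{2}A_{1}\le Cm\cdot 2v\min\{p,2v\}(p+1+2v)^{-2}$; estimating $(p+1+2v)^{-2}$ by $p^{-2}$ when $2v\le p$ and by $(2v)^{-2}$ when $2v>p$ turns this into $\lambda^{2}A_{1}\le Cm\min\{(\tfrac{2v}{p})^{2},\tfrac{p}{2v}\}$, which is the required term because $\tfrac{2v}{p}=\tfrac{\lambda^{2}}{p(j+1)}$. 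For $A_{2}$, the factor $[ve^{v}]^{-1/p}$ in $A_{2}$ cancels the factor $e^{\lambda^{2}/(2p(j+1))}=e^{v/p}$ in $H^{*}(\alpha_{0})$, giving
\[
\lambda^{2}A_{2}\,H^{*}(\alpha_{0})\le C\,(vm)^{1-1/p}(n+1)^{1/p}\bigl[1+\tfrac{2vm}{n}\bigr]^{-1},
\]
and distinguishing $2vm\le n$ from $2vm>n$ (using $vm\le n/2$, respectively $vm>n/2$) bounds the right-hand side by $Cn\le\tfrac{Cpn}{p-1}$. Adding the four pieces --- namely $H^{*}(\alpha_{0})$, $\lambda^{2}A_{1}H^{*}(\alpha_{0})$, $\lambda^{2}A_{2}H^{*}(\alpha_{0})$ and the integral term --- and simplifying with $n+1\asymp n$ and $\tfrac{p}{p-1}\ge 1$ yields the asserted bound, at probability at least $1-3^{-1}\pi^{2}e^{-\lambda^{2}/2}\ge 1-Ce^{-\lambda^{2}/2}$.

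The only genuine obstacle is the bookkeeping: distilling the cumbersome expression for $A$ in Proposition \ref{final productiones} down to the single quantity $\min\{(\tfrac{\lambda^{2}}{p(j+1)})^{2},(\tfrac{\lambda^{2}}{p(j+1)})^{-1}\}$, and recognizing that the second summand of $A$ is of order $n$ (owing to the exponential cancellation noted above), hence absorbable into $\tfrac{Cpn}{p-1}$ rather than contending with the sub-Gaussian term. Everything else is a direct substitution of $H^{*}(x)=x^{-1/p}$ followed by elementary manipulation.
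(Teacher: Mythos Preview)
Your proposal is correct and follows essentially the same route as the paper's own proof: apply Proposition~\ref{final productiones} with $k=n-1$, bound the integral of $H^{*}(x)=x^{-1/p}$ by $\frac{Cpn}{p-1}$, identify $H^{*}(\alpha_{0})$ with $C\bigl(\tfrac{n}{j+1}\bigr)^{1/p}\exp\bigl(\tfrac{\lambda^{2}}{2p(j+1)}\bigr)$, reduce $\lambda^{2}A_{1}$ to the $\min\{(\cdot)^{2},(\cdot)^{-1}\}$ coefficient by case-splitting on $2v\lessgtr p$, and observe the exponential cancellation that makes $\lambda^{2}A_{2}H^{*}(\alpha_{0})\le Cn$. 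The paper labels these pieces $I,II,III,IV$ but the estimates are identical; your explicit dispatch of the range $\lambda<2$ (where the probability bound is vacuous) is a small addition the paper leaves implicit.
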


\noindent \textbf{Note:} The condition $j\leq n/2$ is not necessary, but rather highlights the setting where the bound is most effective.

\begin{proof}
Proposition \ref{final productiones} with $k=n-1$ gives the estimate $\sum_{i=1}^{n-j}Y_{(i)}\leq I+II+III+IV$ where
\begin{eqnarray*}
I&=&Cn\int_0^1H^*(x)dx=\frac{Cpn}{p-1}\\
II&=&H^{*}\left(e^{-1-2/e}\frac{j+1}{n+1}\exp\left(\frac{\lambda^2}{2(j+1)}\right)\right)\leq C\left(\frac{n}{j+1}\right)^{1/p}\exp\left(\frac{\lambda^2}{2p(j+1)}\right)
\end{eqnarray*}
and $III,IV=0$ unless $\lambda^2\geq 2(j+1)$, in which case
\begin{eqnarray*}
III&=&C\lambda^2\min\left\{p,\frac{\lambda^2}{j+1}\right\}\max\left\{p,\frac{\lambda^2}{j+1}\right\}^{-2}H^{*}\left(e^{-1-2/e}\frac{j+1}{n+1}\exp\left(\frac{\lambda^2}{2(j+1)}\right)\right)\\
&\leq&C\lambda^2\min\left\{p,\frac{\lambda^2}{j+1}\right\}\max\left\{p,\frac{\lambda^2}{j+1}\right\}^{-2}\left(\frac{n}{j+1}\right)^{1/p}\exp\left(\frac{\lambda^2}{2p(j+1)}\right)
\end{eqnarray*}
and $IV$ equals
\begin{eqnarray*}
&&C\lambda^2\left[\frac{\lambda^2}{2(j+1)}\exp\left(\frac{\lambda^2}{2(j+1)}\right)\right]^{-1/p}\left(1+\frac{\lambda^2}{n}\right)^{-1}H^{*}\left(e^{-1-2/e}\frac{j+1}{n+1}\exp\left(\frac{\lambda^2}{2(j+1)}\right)\right)\\
&&\leq C\lambda^{2-2/p}n^{1/p}\left(1+\frac{\lambda^2}{n}\right)^{-1}
\end{eqnarray*}
for $\lambda\leq \sqrt{n}$, $IV\leq C\lambda^{2-2/p}n^{1/p}\leq Cn$ and for $\lambda> \sqrt{n}$, $IV\leq C\lambda^{-2/p}n^{1-1/p}<Cn$. Either way, $IV\leq I$.
\end{proof}

\bigskip

\underline{Further remarks under the tail condition $F(x)=\min\{1,x^{-p}\}$.}

\bigskip

Setting $j=0$ in Corollary \ref{eg sum poly tail}, or applying Remark \ref{remarkiou},
\begin{equation*}
\mathbb{P}\left\{ \sum_{i=1}^{n}Y_{i}\geq \frac{Cpn}{p-1}+Cn^{1/p}\exp \left( 
\frac{\lambda ^{2}}{2p}\right)  \right\}
<Ce^{-\lambda^2/2}
\end{equation*}%
This gives the correct order of magnitude for $\sum_{1}^{n}Y_{i}$ in the
i.i.d. case up to the value of $C$, since the
same bound describes the order of magnitude of $n\mathbb{E}Y_{1}+\max_{1\leq
i\leq n}Y_{i}$.

Returning to the case of a general value of $j$, and setting $k=j+1$ and $s^{k}=\exp \left( \lambda ^{2}/2\right) $, the bound in Corollary \ref{eg sum poly tail} can be written as
\begin{equation*}
\mathbb{P}\left\{ \sum_{i=1}^{n-k+1}Y_{(i)}>\frac{Cpn}{p-1}%
+C\left( 1+k\min\left\{\left(\frac{\log s}{p}\right)^2,\left(\frac{\log s}{p}\right)^{-1}\right\}\right)\left(\frac{n}{k}\right)^{1/p}s^{1/p}\right\}\leq Cs^{-k}
\end{equation*}%
for $s>1$. Compare this to the following bound of Gu\'{e}don, Litvak, Pajor, and Tomczak-Jaegermann \cite[Lemma 4.4]{GLPTJ17}: for all $s\in
\left( 1,\infty \right) $,%
\begin{equation*}
\mathbb{P}\left\{ \sum_{i=1}^{n-k+1}Y_{(i)}> \frac{12p\left( es\right)
^{1/p}}{p-1}n\right\} \leq s^{-k}
\end{equation*}%

\subsection{Partial reduction to the case of equal coefficients (geometric approach)}

\subsubsection{\label{dual norm}A norm for quantiles of linear functionals}

Let $\mu $ be any probability measure on $\mathbb{R}^{n}$ not supported on
any half space not containing the origin, and such that%
\begin{equation}
\int_{\mathbb{R}^{n}}\left\vert \left\langle x,a\right\rangle \right\vert
d\mu (x)<\infty  \label{first moment assump}
\end{equation}%
for all $a\in \mathbb{R}^{n}$, let $X=\left( X_{i}\right) _{1}^{n}$ be a
random vector with distribution $\mu $, and let $F_{a}(t)=%
\mathbb{P}\left\{ \sum_1^n a_{i}X_{i}\leq t\right\} $. Set $X^{(0)}=0\in \mathbb{R}^{n}$ and let $%
\left( X^{(j)}\right) _{1}^{\infty }$ be an i.i.d. sample from $\mu $, let $%
\delta \in \left( 0,1/2\right) $, and let $N\sim Pois(\delta ^{-1})$. A
basic result in the theory of Poisson point processes is that the random
measure%
\begin{equation*}
\sum_{j=1}^{N}\delta \left( X^{(j)}\right)
\end{equation*}%
is a Poisson point process with intensity $\delta ^{-1}\mu $, where $\delta
\left( x\right) $ denotes the Dirac point mass at $x$, not to be confused
with $\delta \in \left( 0,1/2\right) $. The set%
\begin{equation*}
\mathfrak{Z}=\mathbb{E}conv\left\{ X_{i}\right\} _{0}^{N}:=\left\{ x\in 
\mathbb{R}^{n}:\forall \theta \in S^{n-1},\left\langle \theta
,x\right\rangle \leq \mathbb{E}\max_{0\leq j\leq N}\left\langle \theta
,X^{(j)}\right\rangle \right\}
\end{equation*}%
is seen to be a compact convex set with nonempty interior (i.e. a convex
body), in fact $0\in int\left( \mathfrak{Z}\right) $. Its dual Minkowski
functional, given by $\left\vert a\right\vert _{\mathfrak{Z}^{\circ }}=\sup
\left\{ \left\langle x,a\right\rangle :x\in \mathfrak{Z}\right\} $, can be
expressed as%
\begin{equation}
\left\vert a\right\vert _{\mathfrak{Z}^{\circ }}=\mathbb{E}\max_{0\leq j\leq
N}\left\langle a,X^{(j)}\right\rangle=\delta ^{-1}\int_{0}^{1-F_{a}\left( 0\right) }F_{a}^{-1}\left( 1-s\right)
\exp \left( -\delta ^{-1}s\right) ds\label{Mink fnc Poisson}
\end{equation}%
This is because for $t>0$, by definition of a Poisson point process,%
\begin{equation*}
G(t):=\mathbb{P}\left\{ \max_{0\leq j\leq N}\left\langle
a,X^{(j)}\right\rangle \leq t\right\} =\exp \left( -\delta ^{-1}\left(
1-F_{a}(t)\right) \right)
\end{equation*}%
so
\[
\mathbb{E}\max_{0\leq j\leq N}\left\langle a,X^{(j)}\right\rangle
=\int_{0}^{1}G^{-1}(t)dt=\int_{\mathbb{P}\left\{ \max =0\right\}
}^{1}F_{a}^{-1}\left( 1-\delta \log t^{-1}\right) dt
\]

This convex body is a modification of the expected convex hull of a fixed
sample size used in \cite{FrVi} (see references therein) and is related to
the dual (polar) of the convex floating body defined by deleting all half
spaces with $\mu $ measure less than $\delta $, see \cite{Bob10, Fr0, ScWer}. Its
advantage over the convex floating body is that there is an explicit formula
for its Minkowski functional (by definition), and its advantage over the
expected convex hull with a fixed sample size is the representation of its dual Minkowski functional in (\ref{Mink fnc Poisson}).

\begin{lemma}
\label{convexity of quantiles}
For all $a\in \mathbb{R}^{n}$ and all $0<\delta <1-F_{a}\left( 0\right) $,
\begin{equation*}
\mathbb{P}\left\{ \sum_{i=1}^{n}a_{i}X_{i}>2\left\vert a\right\vert _{%
\mathfrak{Z}^{\circ }}\right\} \leq \delta \log 2\hspace{0.65in}\mathbb{P}%
\left\{ \sum_{i=1}^{n}a_{i}X_{i}\geq \left( 1+R\right) ^{-1}\left\vert
a\right\vert _{\mathfrak{Z}^{\circ }}\right\} \geq \delta
\end{equation*}
where
\begin{equation*}
R=\frac{\delta ^{-1}\int_{1-\delta }^{1}F_{a}^{-1}\left( t\right) dt}{%
F_{a}^{-1}\left( 1-\delta \right) }
\end{equation*}%

\end{lemma}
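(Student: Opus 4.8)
The plan is to reduce both statements to inequalities about the quantile function $F_{a}^{-1}$ via the integral representation (\ref{Mink fnc Poisson}) of $\left\vert a\right\vert _{\mathfrak{Z}^{\circ }}$. Write $S=\sum_{i}a_{i}X_{i}$, $F=F_{a}$, and $m=\left\vert a\right\vert _{\mathfrak{Z}^{\circ }}$, and recall from the discussion preceding the lemma that $0$ lies in the interior of $\mathfrak{Z}$, so $m>0$. The structural fact I would record first is that $F^{-1}(u)\geq 0$ whenever $u>F(0)$: this is immediate from the definition of the generalized inverse together with monotonicity, since $F(t)\leq F(0)<u$ for every $t\leq 0$. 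Using right-continuity of $F$ and the hypothesis $\delta <1-F(0)$ one gets a little more, namely $F^{-1}(1-\delta )>0$. In particular the integrand $F^{-1}(1-s)e^{-s/\delta }$ in (\ref{Mink fnc Poisson}) is nonnegative throughout the range $s\in (0,1-F(0))$.

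For the first inequality, if $\delta \log 2\geq 1-F(0)$ then $\mathbb{P}\{S>2m\}\leq \mathbb{P}\{S>0\}=1-F(0)\leq \delta \log 2$ and we are done. Otherwise, restrict the integral in (\ref{Mink fnc Poisson}) to $s\in (0,\delta \log 2)$, which is legitimate because the discarded tail is nonnegative, and bound $F^{-1}(1-s)\geq F^{-1}(1-\delta \log 2)$ on that interval; since $\int_{0}^{\delta \log 2}e^{-s/\delta }\,ds=\delta /2$, this gives $m\geq \frac{1}{2}F^{-1}(1-\delta \log 2)$, i.e. $F^{-1}(1-\delta \log 2)\leq 2m$. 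By right-continuity of $F$ this yields $F(2m)\geq 1-\delta \log 2$, that is, $\mathbb{P}\{S>2m\}\leq \delta \log 2$.

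For the second inequality, substitute $u=1-s$ in (\ref{Mink fnc Poisson}) to get $m=\delta ^{-1}\int_{F(0)}^{1}F^{-1}(u)e^{-(1-u)/\delta }\,du$, and split the integral at $u=1-\delta$. On $(1-\delta ,1)$ one has $0\leq e^{-(1-u)/\delta }\leq 1$ and $F^{-1}(u)\geq 0$, so that piece is at most $\delta ^{-1}\int_{1-\delta }^{1}F^{-1}(u)\,du=R\,F^{-1}(1-\delta )$. On $(F(0),1-\delta )$ one has $F^{-1}(u)\leq F^{-1}(1-\delta )$; replacing $F^{-1}(u)$ by $F^{-1}(1-\delta )>0$ and enlarging the range of integration down to $-\infty$ bounds this piece by $\delta ^{-1}F^{-1}(1-\delta )\int_{-\infty }^{1-\delta }e^{-(1-u)/\delta }\,du=e^{-1}F^{-1}(1-\delta )$. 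Adding the two pieces gives $m\leq (e^{-1}+R)F^{-1}(1-\delta )\leq (1+R)F^{-1}(1-\delta )$, hence $(1+R)^{-1}m\leq F^{-1}(1-\delta )$. Finally, for any $x\leq F^{-1}(1-\delta )$ one has $F(t)<1-\delta$ for all $t<x$, so $F(x^{-})\leq 1-\delta$ and therefore $\mathbb{P}\{S\geq x\}=1-F(x^{-})\geq \delta$; applying this with $x=(1+R)^{-1}m$ completes the proof.

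There is no deep obstacle. The only point requiring care is the bookkeeping with generalized inverses: tracking the sign of $F^{-1}$ on each subinterval so that discarding or enlarging a domain of integration pushes the estimate in the correct direction, establishing the strict positivity $F^{-1}(1-\delta )>0$, and translating between quantile inequalities and probability inequalities using the appropriate one-sided continuity of $F$.
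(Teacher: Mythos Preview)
Your proof is correct. For the second inequality you do essentially what the paper does---split the integral in (\ref{Mink fnc Poisson}) at the level $\delta$ and bound each piece---though you obtain the sharper intermediate constant $e^{-1}$ instead of $1$ before discarding it, and you handle the generalized-inverse issue with the left-limit argument $\mathbb{P}\{S\geq x\}=1-F(x^{-})$ rather than the paper's $\varepsilon$-perturbation.

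For the first inequality your route is genuinely different. The paper argues probabilistically: since $\left\vert a\right\vert _{\mathfrak{Z}^{\circ }}$ is the \emph{mean} of the nonnegative variable $M=\max_{0\leq j\leq N}\langle a,X^{(j)}\rangle$, Markov's inequality gives $2\left\vert a\right\vert _{\mathfrak{Z}^{\circ }}\geq \mathbb{M}M$, and then the explicit formula $G(t)=\exp(-\delta^{-1}(1-F_a(t)))$ turns $\mathbb{P}\{S>\mathbb{M}M\}$ directly into $-\delta\log G(\mathbb{M}M)\leq \delta\log 2$. You instead bypass the Poisson-max interpretation entirely and work only with the integral formula, truncating to $s\in(0,\delta\log 2)$ and using $\int_0^{\delta\log 2}e^{-s/\delta}\,ds=\delta/2$. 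Your argument is slightly more self-contained in that it never refers back to the random variable $M$; the paper's argument is shorter once one has $G$ in hand. Both are elementary and neither yields a better constant.
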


\begin{proof}
Comparing the mean $\mathbb{E}$ and any median $\mathbb{M}$,%
\begin{eqnarray*}
\mathbb{P}\left\{ \sum_{i=1}^{n}a_{i}X_{i}>2\left\vert a\right\vert _{%
\mathfrak{Z}^{\circ }}\right\} &\leq &\mathbb{P}\left\{
\sum_{i=1}^{n}a_{i}X_{i}>\mathbb{M}\max_{0\leq j\leq N}\left\langle
a,X^{(j)}\right\rangle \right\} \\
&=&-\delta \log G\left( \mathbb{M}\max_{0\leq j\leq N}\left\langle
a,X^{(j)}\right\rangle \right) \leq \delta \log 2
\end{eqnarray*}%
On the other hand, from (\ref{Mink fnc Poisson}),
\begin{eqnarray*}
\left\vert a\right\vert _{\mathfrak{Z}^{\circ
}}\leq \delta ^{-1}\int_{0}^{\delta }F_{a}^{-1}\left( 1-s\right) ds+\delta
^{-1}\int_{0}^{\infty }F_{a}^{-1}\left( 1-\delta \right) \exp \left( -\delta
^{-1}s\right) ds
\end{eqnarray*}%
so for all $\varepsilon \in \left( 0,1/2\right) $,%
\begin{equation*}
\mathbb{P}\left\{ \sum_{i=1}^{n}a_{i}X_{i}>\left( 1+\varepsilon \right)
^{-1}\left( 1+R\right) ^{-1}\left\vert a\right\vert _{\mathfrak{Z}^{\circ
}}\right\} \geq \mathbb{P}\left\{ \sum_{i=1}^{n}a_{i}X_{i}>\left(
1+\varepsilon \right) ^{-1}F_{a}^{-1}\left( 1-\delta \right) \right\} >\delta
\end{equation*}%
The role of $\varepsilon $ is a technicality related to the definition of
the generalized inverse $F_{a}^{-1}$.
\end{proof}

If, on the other hand, $\mu$ is supported on $[0,\infty)^n$ and for all $a\in\mathbb{R}^n$
\[
\mathbb{P}\left\{\sum_{i-1}^n\left\vert a_i\right\vert X_i>0\right\}>0
\]
and (\ref{first moment assump}) holds, then
\begin{equation}
\left[a\right]_\delta=\mathbb{E}\max_{0\leq j\leq
N}\sum_{i=1}^n\left\vert a_i\right\vert X_i^{(j)}\label{sqrbrnoerm}
\end{equation}
as a function of $a$, is a norm, and Lemma \ref{convexity of quantiles} holds with $\left\vert \cdot\right\vert _{\mathfrak{Z}^{\circ
}}$ replaced with $\left[\cdot\right]_\delta$ and $F_a$ replaced with $F_{(\left\vert a_i\right\vert)_1^n}$. Assuming for simplicity that each $a_i\geq 0$, the version of (\ref{Mink fnc Poisson}) for $\left[\cdot\right]_\delta$ is
\begin{equation}
\left[a\right]_\delta=\delta ^{-1}\int_{0}^{1}F_{a}^{-1}\left( 1-s\right)
\exp \left( -\delta ^{-1}s\right) ds\label{Mink fnc Poisson uncon}
\end{equation}%

\subsubsection{\label{u b norm}A norm characterized by its values on $\{0,1\}^n$}

For any $r\in \left[
1,\infty \right) $ and $q\in \left( 1,\infty \right) $ define
\begin{equation*}
V_{r,q}=\left\{ \max \left\{ \left\vert u\right\vert _{1},r\left\vert
u\right\vert _{q}\right\} ^{-1}u:u\in \left\{ 0,\pm 1\right\} ^{n},u\neq
0\right\}\hspace{2cm}E_{r,q}=conv\left(V_{r,q}\right)
\end{equation*}%
where $conv$ denotes convex hull. The Minkowski functional of $E_{r,q}$ is the norm $\left\vert x\right\vert
_{r,q}=\inf \left\{ \lambda >0:x\in \lambda E_{r,q}\right\} $.

\begin{lemma}\label{formuii}
For all $x\in \left\{ 0,\pm
1\right\} ^{n}$, $\left\vert x\right\vert _{r,q}=\max \left\{ \left\vert
x\right\vert _{1},r\left\vert x\right\vert _{q}\right\}$.
\end{lemma}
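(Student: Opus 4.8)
The plan is to establish the two inequalities $\left\vert x\right\vert_{r,q}\leq\max\{\left\vert x\right\vert_1,r\left\vert x\right\vert_q\}$ and $\left\vert x\right\vert_{r,q}\geq\max\{\left\vert x\right\vert_1,r\left\vert x\right\vert_q\}$ separately, both essentially by unwinding definitions. For the first, if $x=0$ there is nothing to prove, and if $x\in\{0,\pm1\}^n$ is nonzero then, taking $u=x$ in the definition of $V_{r,q}$, the vector $\max\{\left\vert x\right\vert_1,r\left\vert x\right\vert_q\}^{-1}x$ is by construction an element of $V_{r,q}\subseteq E_{r,q}$, which is exactly the statement that $\left\vert x\right\vert_{r,q}\leq\max\{\left\vert x\right\vert_1,r\left\vert x\right\vert_q\}$.

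For the reverse inequality I would introduce the set
\[
B=\{y\in\mathbb{R}^n:\max\{\left\vert y\right\vert_1,r\left\vert y\right\vert_q\}\leq 1\},
\]
i.e. the unit ball of the norm $y\mapsto\max\{\left\vert y\right\vert_1,r\left\vert y\right\vert_q\}$, whose Minkowski functional is that norm itself. The point is that $V_{r,q}\subseteq B$: for nonzero $u\in\{0,\pm1\}^n$ and $v=\max\{\left\vert u\right\vert_1,r\left\vert u\right\vert_q\}^{-1}u$ one has $\left\vert v\right\vert_1=\left\vert u\right\vert_1/\max\{\left\vert u\right\vert_1,r\left\vert u\right\vert_q\}\leq 1$ and, in the same way, $r\left\vert v\right\vert_q\leq 1$. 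Since $B$ is convex, this gives $E_{r,q}=\mathrm{conv}(V_{r,q})\subseteq B$, and hence, since Minkowski functionals reverse inclusions, $\left\vert x\right\vert_{r,q}\geq\max\{\left\vert x\right\vert_1,r\left\vert x\right\vert_q\}$ for \emph{every} $x\in\mathbb{R}^n$. Combining with the first inequality finishes the proof for $x\in\{0,\pm1\}^n$.

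The only point that needs a little care is checking that $E_{r,q}$ really is a convex body with the origin in its interior, so that $\left\vert\cdot\right\vert_{r,q}$ is a genuine finite norm and the inclusion-reversing property of Minkowski functionals is available; this is routine, since for the standard basis vectors $e_1,\dots,e_n$ we have $\pm e_i/\max\{\left\vert e_i\right\vert_1,r\left\vert e_i\right\vert_q\}=\pm e_i/r\in V_{r,q}$ for each $i$ (using $r\geq1$), so $E_{r,q}$ contains the convex hull of $\{\pm e_i/r\}_{i=1}^n$, a neighbourhood of $0$. There is no real obstacle; the lemma is short once one observes that the seemingly nontrivial direction is precisely the one that follows from convexity of $B$. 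For completeness one could instead prove the lower bound by exhibiting, for $x\neq0$, the functionals $\mathrm{sign}(x)$ and $r\,\mathrm{sign}(x)\left\vert x\right\vert_q^{1-q}$ in the polar body $E_{r,q}^{\circ}$ and pairing them with $x$ to recover $\left\vert x\right\vert_1$ and $r\left\vert x\right\vert_q$ respectively, but the comparison with $B$ is cleaner and works for all $x$.
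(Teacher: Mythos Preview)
Your proof is correct and follows essentially the same approach as the paper: both arguments hinge on the observation that $V_{r,q}$ lies in the unit ball $B_{1}^{n}\cap r^{-1}B_{q}^{n}$ of the norm $\max\{|\cdot|_{1},r|\cdot|_{q}\}$, so that $E_{r,q}\subseteq B_{1}^{n}\cap r^{-1}B_{q}^{n}$ by convexity, while membership of the rescaled $x$ in $V_{r,q}$ gives the other inequality. The paper compresses this into a single line by noting $V_{r,q}\subset\partial(B_{1}^{n}\cap r^{-1}B_{q}^{n})$, which forces $V_{r,q}\subset\partial E_{r,q}$; your version simply unpacks the two inequalities separately.
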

\begin{proof}
Since $V_{r,q}\subset \partial \left(B_{1}^{n}\cap r^{-1}B_{q}^{n}\right)$ and $B_{1}^{n}\cap r^{-1}B_{q}^{n}$ is convex, it follows that $V_{r,q}\subset \partial E_{r,q}$.
\end{proof}

\begin{lemma}\label{comparing norms}
If $\left\Vert \cdot \right\Vert $ is any norm on $%
\mathbb{R}^{n}$ and $\left\Vert x\right\Vert \leq \left\vert x\right\vert
_{r,q}$ for all $x\in \left\{ 0,\pm 1\right\} ^{n}$, then $\left\Vert
x\right\Vert \leq \left\vert x\right\vert _{r,q}$ for all $x\in \mathbb{R}%
^{n}$.
\end{lemma}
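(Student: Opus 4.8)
The plan is to exploit the fact that $\left\vert\cdot\right\vert_{r,q}$ is by definition the Minkowski functional of the convex hull $E_{r,q}=\mathrm{conv}(V_{r,q})$, so that the desired inequality $\left\Vert x\right\Vert\leq\left\vert x\right\vert_{r,q}$ for all $x\in\mathbb{R}^n$ is equivalent to the single inclusion $E_{r,q}\subseteq B_{\left\Vert\cdot\right\Vert}$, where $B_{\left\Vert\cdot\right\Vert}=\{x:\left\Vert x\right\Vert\leq 1\}$. Since $E_{r,q}$ is the convex hull of the finite set $V_{r,q}$ and $\left\Vert\cdot\right\Vert$ is a norm (so $B_{\left\Vert\cdot\right\Vert}$ is convex), this inclusion need only be checked on the elements of $V_{r,q}$.

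First I would pass from the hypothesis on $\{0,\pm 1\}^n$ to a statement about the generators in $V_{r,q}$. A typical element of $V_{r,q}$ has the form $v=\max\{\left\vert u\right\vert_1,r\left\vert u\right\vert_q\}^{-1}u$ for some $u\in\{0,\pm 1\}^n\setminus\{0\}$. By Lemma \ref{formuii}, $\left\vert u\right\vert_{r,q}=\max\{\left\vert u\right\vert_1,r\left\vert u\right\vert_q\}$, hence $\left\vert v\right\vert_{r,q}=1$; and by $1$-homogeneity of $\left\Vert\cdot\right\Vert$ together with the hypothesis applied to $u\in\{0,\pm 1\}^n$,
\[
\left\Vert v\right\Vert=\max\{\left\vert u\right\vert_1,r\left\vert u\right\vert_q\}^{-1}\left\Vert u\right\Vert\leq\max\{\left\vert u\right\vert_1,r\left\vert u\right\vert_q\}^{-1}\left\vert u\right\vert_{r,q}=1.
\]
Thus $V_{r,q}\subseteq B_{\left\Vert\cdot\right\Vert}$, and by convexity of $B_{\left\Vert\cdot\right\Vert}$ we conclude $E_{r,q}=\mathrm{conv}(V_{r,q})\subseteq B_{\left\Vert\cdot\right\Vert}$. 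Finally, for arbitrary $x\neq 0$, the set $V_{r,q}$ is finite so $E_{r,q}$ is compact, and $\pm e_i/r\in E_{r,q}$ for every $i$, so $0\in\mathrm{int}(E_{r,q})$ and $\left\vert\cdot\right\vert_{r,q}$ is a genuine norm; the infimum defining $\left\vert x\right\vert_{r,q}$ is attained, so $x/\left\vert x\right\vert_{r,q}\in E_{r,q}\subseteq B_{\left\Vert\cdot\right\Vert}$, whence $\left\Vert x\right\Vert\leq\left\vert x\right\vert_{r,q}$. The case $x=0$ is trivial.

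I do not expect any real obstacle here beyond bookkeeping. The one point that is not completely formal is the identity $\left\vert v\right\vert_{r,q}=1$ for the generators $v$, i.e. that the normalizing factor $\max\{\left\vert u\right\vert_1,r\left\vert u\right\vert_q\}$ is exactly the right one so that each $v$ sits on $\partial E_{r,q}$; this is precisely Lemma \ref{formuii}, which rests on $V_{r,q}\subset\partial(B_1^n\cap r^{-1}B_q^n)$. The only other thing worth spelling out is that the hypothesis, being an inequality between two $1$-homogeneous functions, automatically extends from $\{0,\pm 1\}^n$ to all scalar multiples of such vectors — which is what licenses the passage to $v\in V_{r,q}$ above.
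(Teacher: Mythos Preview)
Your proposal is correct and follows essentially the same route as the paper: both arguments use Lemma~\ref{formuii} to identify each generator $v\in V_{r,q}$ with $\left\vert u\right\vert_{r,q}^{-1}u$, invoke the hypothesis to get $\left\Vert v\right\Vert\leq 1$, and then pass to the convex hull $E_{r,q}\subseteq B_{\left\Vert\cdot\right\Vert}$. Your write-up is more explicit about the bookkeeping (attainment of the infimum, $0\in\mathrm{int}(E_{r,q})$, the $x=0$ case), but the substance is identical.
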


\begin{proof}
This follows since
\begin{eqnarray*}
conv\left\{ \left\vert u\right\vert
_{r,q}^{-1}u:u\in \left\{ 0,\pm 1\right\} ^{n},u\neq
0\right\}\subseteq conv\left\{ \left\Vert u\right\Vert^{-1}u:u\in \left\{ 0,\pm 1\right\} ^{n},u\neq
0\right\}
\end{eqnarray*}%
By Lemma \ref{formuii}, $LHS$ is $E_{r,q}$, and $RHS$ is a subset of the unit ball corresponding to $\left\vert \cdot\right\vert
_{r,q}$.

\end{proof}

The dual Minkowski functional of $E_{r,q}$ is defined by
\begin{eqnarray*}
\left\vert y\right\vert _{r,q}^{\circ }=\sup \left\{
\sum_{i=1}^{n}x_{i}y_{i}:x\in E_{r,q}\right\}
\end{eqnarray*}%
Recall that $(y_{[i]})_1^n$ denote the non-increasing rearrangement of the absolute values of $(y_i)_1^n$.

\begin{proposition}\label{estimates normii hieri}
For all $x,y\in\mathbb{R}^n$,
\begin{eqnarray}
\left\vert y\right\vert _{r,q}^{\circ } &\leq &2\sup \left\{ r^{-1}k^{-1/q}\sum_{i=1}^{k}y_{[i]}:1\leq k\leq \min
\left\{ r^{q/(q-1)},n\right\} \right\} \leq 2\left\vert y\right\vert
_{r,q}^{\circ }\label{dual mink lor fnc}
\end{eqnarray}%
and
\begin{equation}
\left\vert x\right\vert _{r,q} \leq 4q^{-1}\left( \left\vert x\right\vert
_{1}+r\sum_{i=1}^{n}i^{-1+1/q}x_{[i]}\right) \leq 16\left\vert
x\right\vert _{r,q}\label{mink lor fnc}
\end{equation}%
\end{proposition}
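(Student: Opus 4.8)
The two displays are dual to each other, so the natural strategy is to prove the estimate for the dual functional $|y|_{r,q}^\circ$ first, then deduce the estimate for $|x|_{r,q}$ by a duality argument. For the first inequality in \eqref{dual mink lor fnc}: by definition $|y|_{r,q}^\circ = \sup\{\langle u,y\rangle / \max\{|u|_1, r|u|_q\} : u \in \{0,\pm1\}^n, u\neq 0\}$, since the supremum of a linear functional over $E_{r,q} = \mathrm{conv}(V_{r,q})$ is attained at a vertex. For a fixed $u \in \{0,\pm1\}^n$ with $|\{i : u_i \neq 0\}| = k$, the numerator $\langle u,y\rangle$ is maximized (over sign choices) by $\sum_{i=1}^k y_{[i]}$, while $|u|_1 = k$ and $|u|_q = k^{1/q}$, so $\max\{|u|_1, r|u|_q\} = \max\{k, rk^{1/q}\}$. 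Thus
\[
|y|_{r,q}^\circ = \max_{1\leq k\leq n} \frac{\sum_{i=1}^k y_{[i]}}{\max\{k, rk^{1/q}\}}.
\]
Now split the max over $k$ according to whether $k \leq r^{q/(q-1)}$ (so $rk^{1/q} \geq k$ and the denominator is $rk^{1/q}$) or $k > r^{q/(q-1)}$ (denominator is $k$). The first regime gives exactly $\sup\{r^{-1}k^{-1/q}\sum_{i=1}^k y_{[i]} : 1\leq k\leq \min\{r^{q/(q-1)},n\}\}$. For the second regime one shows that $k^{-1}\sum_{i=1}^k y_{[i]} \leq 2 r^{-1} m^{-1/q}\sum_{i=1}^m y_{[i]}$ where $m = \lfloor r^{q/(q-1)}\rfloor$ (or $m=n$ if $n$ is smaller), using that the $y_{[i]}$ are non-increasing so the running averages $k^{-1}\sum_{i=1}^k y_{[i]}$ decrease in $k$, hence the second-regime values are dominated (up to the constant from taking the integer part near $r^{q/(q-1)}$) by the endpoint value in the first regime. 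That accounts for the factor $2$; the final inequality $\leq 2|y|_{r,q}^\circ$ is immediate since each term in the sup is of the form $\langle u,y\rangle/\max\{|u|_1,r|u|_q\}$ for the appropriate $u$, hence $\leq |y|_{r,q}^\circ$.

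For \eqref{mink lor fnc}, the plan is to apply Lemma \ref{comparing norms}: it suffices to produce a norm $\|\cdot\|$ with $\|x\| \leq 4q^{-1}(|x|_1 + r\sum i^{-1+1/q}x_{[i]})$ for all $x$ and $\|x\| = \max\{|x|_1, r|x|_q\}$ on $\{0,\pm1\}^n$ — no, more directly, I would show the middle quantity $N(x) := 4q^{-1}(|x|_1 + r\sum_{i=1}^n i^{-1+1/q}x_{[i]})$ is itself a norm (it is, being a sum of $|x|_1$ and a positive multiple of a Lorentz-type norm $\sum i^{-1+1/q}x_{[i]}$, which is a norm since the weights $i^{-1+1/q}$ are non-increasing), and that on $\{0,\pm1\}^n$ it satisfies $\max\{|x|_1, r|x|_q\} \leq N(x) \leq 16\max\{|x|_1,r|x|_q\}$. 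The lower bound half together with Lemma \ref{comparing norms} (applied with $\|\cdot\| = |\cdot|_{r,q}$, noting $|\cdot|_{r,q} = \max\{|\cdot|_1,r|\cdot|_q\} \leq N$ on $\{0,\pm1\}^n$ by Lemma \ref{formuii}) gives $|x|_{r,q}\leq N(x)$ for all $x$, i.e. the first inequality of \eqref{mink lor fnc}. For the last inequality of \eqref{mink lor fnc}, I would dualize: $|x|_{r,q} \geq \langle x,y\rangle/|y|_{r,q}^\circ$ for all $y$, and choosing $y$ appropriately (or estimating $N(x)$ directly against $|x|_{r,q}$ via the dual characterization $N(x) = \sup\{\langle x,y\rangle : N^\circ(y)\leq 1\}$ and comparing $N^\circ$ with the expression in \eqref{dual mink lor fnc}) yields $N(x) \leq 16|x|_{r,q}$. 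Concretely, it is cleanest to compute: for $x \in \{0,\pm1\}^n$ supported on a set of size $k$, we have $|x|_1 = k$, $r|x|_q = rk^{1/q}$, and $\sum_{i=1}^n i^{-1+1/q}x_{[i]} = \sum_{i=1}^k i^{-1+1/q}$, which by the integral comparison $\int_0^k t^{-1+1/q}dt = qk^{1/q}$ lies between $k^{1/q}$ and $qk^{1/q}$; so $N(x) = 4q^{-1}(k + r\cdot(\text{between }k^{1/q}\text{ and }qk^{1/q})) $ lies between $4q^{-1}\max\{k, rk^{1/q}\}$ and $4(\,k/q + rk^{1/q}\,) \leq 8\max\{k,rk^{1/q}\}$ — here one must track the constants carefully to land in $[1,16]$, which is where the stated constant $4q^{-1}$ (rather than something cleaner) comes from, absorbing the $q$ in $\sum_{i=1}^k i^{-1+1/q} \asymp qk^{1/q}$.

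The main obstacle I anticipate is the bookkeeping of universal constants: making the two-sided estimates land inside the claimed numerical windows ($[1,2]$ for the dual side, $[1,16]$ with the $4q^{-1}$ prefactor for the primal side) requires being careful about (i) the integer-part loss near $k = r^{q/(q-1)}$ in the dual estimate, which is exactly what forces the factor $2$, and (ii) the two-sided bound $k^{1/q} \leq \sum_{i=1}^k i^{-1+1/q} \leq q k^{1/q}$ and how it interacts with the $\max\{|x|_1, r|x|_q\}$ structure — in particular one must check that $4q^{-1}|x|_1 = 4k/q$ does not by itself overshoot $16\max\{k, rk^{1/q}\}$, which holds since $q > 1$ forces $4/q < 4 < 16$. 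The conceptual content — attaining suprema at vertices, running averages of a decreasing sequence being decreasing, and Lemma \ref{comparing norms} reducing everything to $\{0,\pm1\}^n$ — is routine; the work is entirely in the constants.
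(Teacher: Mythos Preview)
Your treatment of (\ref{dual mink lor fnc}) is correct and matches the paper: one computes $|y|_{r,q}^\circ=\max_{1\le k\le n}\bigl(\max\{k,rk^{1/q}\}\bigr)^{-1}\sum_{i=1}^k y_{[i]}$ by maximizing the linear functional over the finite set $V_{r,q}$, then uses monotonicity of the Ces\`aro means $k^{-1}\sum_{i=1}^k y_{[i]}$ to discard the regime $k>r^{q/(q-1)}$, with the factor $2$ paying for the integer part at the threshold.

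For (\ref{mink lor fnc}), however, you have the two roles of Lemma~\ref{comparing norms} reversed, and this is a genuine gap. That lemma is a one-way principle: because $E_{r,q}$ is the convex hull of (rescaled) sign vectors, any norm $\|\cdot\|$ that is dominated by $|\cdot|_{r,q}$ on $\{0,\pm1\}^n$ is dominated by it everywhere. It therefore produces inequalities of the form $\|\cdot\|\le C|\cdot|_{r,q}$, never $|\cdot|_{r,q}\le C\|\cdot\|$. Your proposed argument for the \emph{first} inequality --- check $|x|_{r,q}\le N(x)$ on $\{0,\pm1\}^n$ and invoke Lemma~\ref{comparing norms} with $\|\cdot\|=|\cdot|_{r,q}$ --- yields only the tautology $|\cdot|_{r,q}\le|\cdot|_{r,q}$. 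Knowing that the extreme points of $E_{r,q}$ lie in the $N$-unit ball tells you $E_{r,q}$ is contained in that ball, i.e.\ $N\le|\cdot|_{r,q}$, which is the wrong direction. Conversely, it is precisely the \emph{second} inequality $N(x)\le 16|x|_{r,q}$ that \emph{does} follow from Lemma~\ref{comparing norms}: take $\|\cdot\|=N/16$ and verify $N\le 16\max\{|\cdot|_1,r|\cdot|_q\}$ on $\{0,\pm1\}^n$ via $\sum_{i=1}^k i^{-1+1/q}\le qk^{1/q}$. You had planned duality for that half, which is unnecessary.

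The paper handles the first inequality by a genuinely different route: it uses the bidual embedding $|x|_{r,q}=\sup\{\langle x,y\rangle:|y|_{r,q}^\circ\le1\}$, replaces $|y|_{r,q}^\circ$ by the equivalent quantity $|y|_\sharp$ obtained in (\ref{dual mink lor fnc}), and then analyzes the coordinates of the maximizing $y$ directly (equivalently, observes that on the cone of strictly positive, strictly decreasing vectors the boundary $\partial E_{r,q}$ lies in a single hyperplane determined by $n$ known points of $V_{r,q}$). This optimizer/facet argument is where the actual content of the first inequality in (\ref{mink lor fnc}) resides, and it does not reduce to a check on $\{0,\pm1\}^n$.
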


\begin{proof}
The right hand inequality in (\ref{dual mink lor fnc}) follows from the definition of $\left\vert y\right\vert _{r,q}^{\circ }$, since the supremum is an upper bound. For the left hand inequality, note that
\[
\left\vert y\right\vert _{r,q}^{\circ }=\sup \left\{
\sum_{i=1}^{n}x_{i}y_{i}:x\in V_{r,q}\right\}
\]
Since $V_{r,q}$ is invariant under coordinate permutations and coordinate sign changes, so is $E_{r,q}$, and
\begin{eqnarray*}
\left\vert y\right\vert _{r,q}^{\circ }=\left\vert \left(y_{[i]}\right)_1^n\right\vert _{r,q}^{\circ }=\sup\left\{\max \left\{ k,rk^{1/q}\right\} ^{-1}\sum_{i=1}^ky_{[i]}:1\leq k \leq n\right\}
\end{eqnarray*}
For $k\geq r^{q/(q-1)}$, $\max \left\{ k,rk^{1/q}\right\}=k$ and $k^{-1}\sum_1^ky_{[i]}$ is non-increasing in $k$, so we may restrict our attention to values of $k$ such that $k\leq \lceil r^{q/(q-1)}\rceil$. The factor of $2$ is the price we pay for neglecting $k=\lceil r^{q/(q-1)}\rceil$. For (\ref{mink lor fnc}), assume without loss of generality that the coordinates of $x$ are strictly positive and strictly decreasing. Since the canonical embedding of a normed space into its bidual is an
isometry,
\begin{equation}
\left\vert x\right\vert _{r,q}=\sup \left\{
\sum_{i=1}^{n}x_{i}y_{i}:\left\vert y\right\vert _{r,q}^{\circ }\leq
1\right\}\label{embed bidualii}
\end{equation}
Now evaluate this supremum by finding the appropriate $y$, and replacing $\left\vert y\right\vert _{r,q}^{\circ }$ with the equivalent quantity
\[
\left\vert y \right\vert_\sharp=\sup \left\{ r^{-1}k^{-1/q}\sum_{i=1}^{k}y_{[i]}:1\leq k\leq \min
\left\{ r^{q/(q-1)},n\right\} \right\} 
\]
Bounds on the coordinates of $y$ are achieved by exploiting the fact that $\left\vert y \right\vert_\sharp\leq1$ and that $y$ is a maximizer of $\sum x_iy_i$. Including non-explicit constants of the form $C_q,c_q$ may help to simplify the calculations. An alternative method is to notice that within the collection of points with positive decreasing coordinates, $\partial E_{r,q}$ is contained in a hyperplane determined by $n$ given points.
\end{proof}

\subsection{Partial reduction to the case of equal coefficients (combinatorial approach)\label{sum reduction}}

For $n\in\mathbb{N}$ and $0\leq k\leq n$, the symbol $S(n,k)$ represents the number of ways to partition a set of cardinality $n$ into a total of $k$ nonempty subsets, taking $S(n,0)=0$. This is known as a Stirling number of the second kind. It follows that the number of functions $f:\{1,2,\cdots, n\}\rightarrow \{1,2,\cdots, n\}$ with $\left\vert Range(f)\right\vert=k$ is equal to
\[
E(n,k)=\frac{n!}{(n-k)!}S(n,k)
\]
For $k\geq 1$, $S(n+1,k)=kS(n,k)+S(n,k-1)$. This can easily be seen by taking a set of $n+1$ elements, setting one aside, and considering partitions where the distinguished element stands alone as a singleton and those where it does not.

\begin{lemma}\label{Stirling no two}
For all $n\in\mathbb{N}$ and $n/2\leq k\leq n$, $k!S(n,k)\geq (n-k)!S(n,n-k)$, which can be written as $E(n,k)\geq E(n,n-k)$.
\end{lemma}

\begin{proof}
Consider the lemma as a sequence of statements $(\mathcal{S}_n)_1^\infty$. $\mathcal{S}_1$ is seen to be true since $E(1,0)=0$ while $E(1,1)=1$. Suppose that $\mathcal{S}_n$ is true for some $n\geq 1$ and consider $\mathcal{S}_{n+1}$. If $k=(n+1)/2$ then the required inequality for $\mathcal{S}_{n+1}$ holds with equality. If $k>(n+1)/2$ then $k\geq (n+2)/2$ and by $\mathcal{S}_n$, $k!S(n+1,k)=k!\left[kS(n,k)+S(n,k-1)\right]$ can be bounded below by
\begin{eqnarray*}
&&k(n-k)!S(n,n-k)+k(n-k+1)!S(n,n-k+1)\\
&\geq&(n-k+1)!S(n,n-k)+(n-k+1)(n-k+1)!S(n,n-k+1)\\
&\geq&(n-k+1)!S(n+1,n-k+1)
\end{eqnarray*}
So $\mathcal{S}_{n+1}$ is true.
\end{proof}

\begin{theorem}\label{to iid}
Let $I=(I(i))_1^n$ be an i.i.d. sequence of random integers uniformly distributed in $\{1,2,\cdots, n\}$ and $V=(V_i)_1^n$ an i.i.d. sequence of non-negative random variables independent of $I$. Then for all $b\in[0,\infty)^n$ and all $t>0$,
\[
\mathbb{P}\left\{\sum_{i=1}^n b_iV_i \geq t\right\}\leq 2\mathbb{P}\left\{\sum_{i=1}^n b_{I(i)}V_i \geq \frac{t}{2}\right\}
\]
\end{theorem}

\begin{proof}
Because the distributions in question do not depend on the underlying probability space, we may assume without loss of generality that this underlying probability space is rich enough to support the independent random variables that we introduce throughout the proof, and that it is non-atomic. Consider any $v\in[0,\infty)^n$ and let $\sigma$ be a random permutation uniformly distributed in the symmetric group $S_n$ and independent of $(I,V)$. Let
\[
E=\left\{k\in\{1,2,\cdots, n\}:\forall i\leq k, I(i)\neq I(k)\right\}   \hspace{2cm}   F=\{1,2,\cdots, n\}\setminus E
\]
Consider an i.i.d. collection of random bijections $(q_{G,H})_{P}$ indexed by
\[
P=\left\{(G,H)\in \mathcal{P}(\{1,2,\cdots, n\})\times \mathcal{P}(\{1,2,\cdots, n\}):\left\vert G\right\vert =\left\vert H\right\vert \right\}
\]
where $\mathcal{P}(\cdot)$ denotes power set and each $q_{G,H}$ is uniformly distributed among the collection of all bijections from $G$ to $H$. We do not exclude the element $(\emptyset,\emptyset)$ from $P$. We take $(q_{G,H})_{P}$ to be independent of $(I,V,\sigma)$. Let $\theta \in S_n$ be the random permutation defined as
\[
\theta(i)=\left\{
\begin{array}{ccc}
I(i)&:& i\in E\\
q_{F,\{1,2,\cdots, n\}\setminus I(E)}(i)&:& i\in F
\end{array}
\right.
\]
Note that $\theta$ is uniformly distributed in $S_n$, and independent of $(V,\sigma)$ because it is defined in terms of $I$ and $(q_{G,H})_{P}$.

\medskip

\noindent \underline{Claim 1:} $E$ is independent of $(\theta, \sigma)$.

\noindent \underline{Proof of Claim 1:} Consider any $\theta^{(0)}\in S_n$ and $E_0\subseteq \{1,2,\cdots, n\}$ with $1\in E_0$, and let $F_0=\{1,2,\cdots, n\}\setminus E_0$. Now $\left\{\theta=\theta^{(0)}\right\}\cap\left\{E=E_0\right\}$ is equal to
\begin{eqnarray*}
\left[\cap_{i\in E_0}\left\{I(i)=\theta_i^{(0)}\right\}\right]&\cap&\left[\cap_{i\in F_0}\left\{I(i)\in\left\{\theta_j^{(0)}:j<i,j\in E_0\right\}\right\}\right]\\
&\cap&\left\{q_{F_0,\{1,2,\cdots, n\}\setminus \theta^{(0)}(E_0)}=\theta^{(0)}|_{F_0}\right\}
\end{eqnarray*}
where $\theta^{(0)}|_{F_0}$ denotes the restriction of $\theta^{(0)}$ to ${F_0}$. This can be seen by showing that set inclusion holds in both directions and noting that $I(i)=I(j)$ for some $j<i$ if and only if $I(i)=I(j)$ for some $j<i$ with $j\in E$. Since $(q_{G,H})_{P}$ is independent of $I$,
\begin{eqnarray*}
&&\mathbb{P}\left(\left\{\theta=\theta^{(0)}\right\}\cap\left\{E=E_0\right\}\right)\\
&=&\mathbb{P}\left(\left[\cap_{i\in E_0}\left\{I(i)=\theta_i^{(0)}\right\}\right]\cap\left[\cap_{i\in F_0}\left\{I(i)\in\left\{\theta_j^{(0)}:j<i,j\in E_0\right\}\right\}\right]\right)\\
&\times&\mathbb{P}\left(\left\{q_{F_0,\{1,2,\cdots, n\}\setminus \theta^{(0)}(E_0)}=\theta^{(0)}|_{F_0}\right\}\right)
\end{eqnarray*}
Since the coordinates of $I$ are independent of each other this reduces to
\begin{eqnarray*}
&&n^{-\left\vert E_0\right\vert}\left(\prod_{i\in F_0}\frac{\left\vert\left\{\theta_j^{(0)}:j<i,j\in E_0\right\}\right\vert}{n}\right)\left\vert F_0\right\vert !\\
&=&n^{-n}\left(\prod_{i\in F_0}\left\vert\left\{j:j<i,j\in E_0\right\}\right\vert\right)\left\vert F_0\right\vert !
\end{eqnarray*}
Since this probability does not depend on $\theta^{(0)}$ and
\[
\sum_{\theta^*\in S_n}\mathbb{P}\left(\left\{\theta=\theta^*\right\}\cap\left\{E=E_0\right\}\right)=\mathbb{P}\left(\left\{E=E_0\right\}\right)
\]
we conclude that
\[
\mathbb{P}\left(\left\{\theta=\theta^{(0)}\right\}\cap\left\{E=E_0\right\}\right)=\frac{1}{n!}\mathbb{P}\left(\left\{E=E_0\right\}\right)=\mathbb{P}\left(\left\{\theta=\theta^{(0)}\right\}\right)\mathbb{P}\left(\left\{E=E_0\right\}\right)
\]
which is enough to show that $\theta$ and $E$ are independent. Yet $\sigma$ is independent of $\left(I,\theta\right)$ and therefore of $(E,\theta)$, so the distribution of $(E, \theta, \sigma)$ is a product measure.

\medskip

\noindent \underline{Claim 2:} For any (deterministic) $G_0,G_0^*\subseteq \{1,2,\cdots, n\}$ such that $\left\vert G_0\right\vert=\left\vert G_0^*\right\vert$, the random variables
\[
\sum_{i\in G_0}b_{\theta(i)}v_{\sigma(i)} \hspace{1cm} \text{and} \hspace{1cm}  \sum_{i\in G_0^*}b_{\theta(i)}v_{\sigma(i)}
\]
have the same distribution. Consequently, if $G_1\subseteq \{1,2,\cdots, n\}$ and $\left\vert G_0\right\vert\leq\left\vert G_1\right\vert$, then for all $t>0$,
\[
\mathbb{P}\left\{\sum_{i\in G_1}b_{\theta(i)}v_{\sigma(i)}\geq t\right\} \geq \mathbb{P}\left\{\sum_{i\in G_0}b_{\theta(i)}v_{\sigma(i)}\geq t\right\}
\]
Here we take $\sum_{i\in\emptyset}=0$.

\noindent \underline{Proof of Claim 2:} We may assume that $G_0$ and $G_0^*$ are non-empty. Consider any fixed $\omega\in S_n$ that maps $G_0$ to $G_0^*$. Then
\[
\sum_{i\in G_0}b_{\theta(i)}v_{\sigma(i)}=\sum_{i\in G_0^*}b_{\theta\omega^{-1}(i)}v_{\sigma\omega^{-1}(i)}
\]
As observed before, $\theta$ and $\sigma$ are independent and both uniformly distributed on $S_n$, so the joint distribution of $(\theta,\sigma)$ in $S_n\times S_n$ is the uniform distribution. Since $\omega$ is fixed, the same can be said of $\theta\omega^{-1}$ and $\sigma\omega^{-1}$. Yet the distributions of
\[
\sum_{i\in G_0^*}b_{\theta\omega^{-1}(i)}v_{\sigma\omega^{-1}(i)} \hspace{2cm} \text{and} \hspace{2cm} \sum_{i\in G_0^*}b_{\theta(i)}v_{\sigma(i)}
\]
are the push-forward measures of the distributions of $(\theta\omega^{-1},\sigma\omega^{-1})$ and $(\theta,\sigma)$ under the action of
\[
(\alpha,\beta)\mapsto\sum_{i\in G_0^*}b_{\alpha(i)}v_{\beta(i)}
\]
so these two sums have the same distribution. The last part of the claim follows by taking $G'\subseteq G_1$ with $\left\vert G'\right\vert=\left\vert G_0\right\vert$, using the fact that the terms are non-negative, and applying the first part of the claim to conclude that
\[
\mathbb{P}\left\{\sum_{i\in G_1}b_{\theta(i)}v_{\sigma(i)}\geq t\right\} \geq\mathbb{P}\left\{\sum_{i\in G'}b_{\theta(i)}v_{\sigma(i)}\geq t\right\} = \mathbb{P}\left\{\sum_{i\in G_0}b_{\theta(i)}v_{\sigma(i)}\geq t\right\}
\]

\medskip

\noindent \underline{Claim 3:} Let $G_0$ and $G_1$ be random subsets of $\{1,2,\cdots, n\}$, not necessarily uniformly distributed in the power set, and assume that for all $k\in\{0,1,2,\cdots, n\}$, $\mathbb{P}\left\{\left\vert G_1 \right\vert\geq k\right\} \geq \mathbb{P}\left\{\left\vert G_0 \right\vert\geq k\right\}$. Assume also that $(G_0,G_1)$ is independent of the ordered pair $(\theta,\sigma)$. Then for all $t>0$,
\[
\mathbb{P}\left\{\sum_{i\in G_1}b_{\theta(i)}v_{\sigma(i)}\geq t\right\} \geq \mathbb{P}\left\{\sum_{i\in G_0}b_{\theta(i)}v_{\sigma(i)}\geq t\right\}
\]

\noindent \underline{Proof of Claim 3:} Fix any sequence of sets $\left(G^{(k)}\right)_0^n$ with $\left\vert G^{(k)} \right\vert=k$. By independence, for any $t>0$,
\begin{eqnarray*}
\mathbb{P}\left\{\sum_{i\in G_0}b_{\theta(i)}v_{\sigma(i)}\geq t\right\}&=&\sum_{k=0}^n\sum_{\left\vert G^* \right\vert=k}\mathbb{P}\left\{\sum_{i\in G_0}b_{\theta(i)}v_{\sigma(i)}\geq t\hspace{0.2cm}\text{and}\hspace{0.2cm}G_0=G^*\right\}\\
&=&\sum_{k=0}^n\sum_{\left\vert G^* \right\vert=k}\mathbb{P}\left\{\sum_{i\in G^*}b_{\theta(i)}v_{\sigma(i)}\geq t\hspace{0.2cm}\text{and}\hspace{0.2cm}G_0=G^*\right\}\\
&=&\sum_{k=0}^n\sum_{\left\vert G^* \right\vert=k}\mathbb{P}\left\{\sum_{i\in G^*}b_{\theta(i)}v_{\sigma(i)}\geq t\right\}\mathbb{P}\left\{G_0=G^*\right\}
\end{eqnarray*}
By Claim 2 this can be written as
\[
\sum_{k=0}^n\sum_{\left\vert G^* \right\vert=k}\mathbb{P}\left\{\sum_{i\in G^{(k)}}b_{\theta(i)}v_{\sigma(i)}\geq t\right\}\mathbb{P}\left\{G_0=G^*\right\}=
\sum_{k=0}^n\mathbb{P}\left\{\sum_{i\in G^{(k)}}b_{\theta(i)}v_{\sigma(i)}\geq t\right\}\mathbb{P}\left\{\left\vert G_0 \right\vert=k\right\}
\]
Similarly,
\[
\mathbb{P}\left\{\sum_{i\in G_1}b_{\theta(i)}v_{\sigma(i)}\geq t\right\}=\sum_{k=0}^n\mathbb{P}\left\{\sum_{i\in G^{(k)}}b_{\theta(i)}v_{\sigma(i)}\geq t\right\}\mathbb{P}\left\{\left\vert G_1 \right\vert=k\right\}
\]
By Claim 2 again
\[
\mathbb{P}\left\{\sum_{i\in G^{(k)}}b_{\theta(i)}v_{\sigma(i)}\geq t\right\}
\]
is a non-decreasing function of $k$. Because the distribution of $\left\vert G_0 \right\vert$ is dominated by the distribution of $\left\vert G_1 \right\vert$, this implies that Claim 3 is true.

\medskip

\noindent \underline{Claim 4:} For all $t>0$,
\[
\mathbb{P}\left\{\sum_{i\in E}b_{\theta(i)}v_{\sigma(i)}\geq t\right\} \geq \mathbb{P}\left\{\sum_{i\in F}b_{\theta(i)}v_{\sigma(i)}\geq t\right\}
\]

\noindent \underline{Proof of Claim 4:} By Lemma \ref{Stirling no two}, the distribution of $\left\vert E \right\vert$ dominates the distribution of $\left\vert F \right\vert$. Because $E$ is independent of $(\theta, \sigma)$ and $F$ is a function of $E$, the ordered pair $(E,F)$ is independent of $(\theta, \sigma)$. Claim 4 now follows from Claim 3.

\medskip

\noindent \underline{Claim 5:} For all $t>0$,
\[
\mathbb{P}\left\{\sum_{i=1}^nb_{\theta(i)}v_{\sigma(i)}\geq t\right\} \leq 2\mathbb{P}\left\{\sum_{i=1}^nb_{I(i)}v_{\sigma(i)}\geq \frac{t}{2}\right\}
\]

\noindent \underline{Proof of Claim 5:} The LHS is bounded above by
\begin{eqnarray*}
\mathbb{P}\left\{\sum_{i\in E}b_{\theta(i)}v_{\sigma(i)}\geq \frac{t}{2}\right\}+\mathbb{P}\left\{\sum_{i\in F}b_{\theta(i)}v_{\sigma(i)} \geq \frac{t}{2}\right\}&\leq& 2\mathbb{P}\left\{\sum_{i\in E}b_{\theta(i)}v_{\sigma(i)}\geq \frac{t}{2}\right\}\\
&=&2\mathbb{P}\left\{\sum_{i\in E}b_{I(i)}v_{\sigma(i)}\geq \frac{t}{2}\right\}
\end{eqnarray*}
which is bounded above by the RHS.
\medskip

\noindent \underline{Claim 6:} The Theorem is true.

\noindent \underline{Proof of Claim 6:} Since $V$ has not entered the proof until now we can take it to be independent of everything else (assuming as we are that the underlying probability space is rich enough). So
\[
\mathbb{P}\left\{\sum_{i=1}^nb_{\theta(i)}V_{\sigma(i)}\geq t\right\}=\int_{[0,\infty)^n}\mathbb{P}\left\{\sum_{i=1}^nb_{\theta(i)}v_{\sigma(i)}\geq t\right\}d\mathbb{P}_V(v)
\]
where $\mathbb{P}_V$ is the distribution of $V$. By Claim 5 this is bounded above by
\[
\int_{[0,\infty)^n}2\mathbb{P}\left\{\sum_{i=1}^nb_{I(i)}v_{\sigma(i)}\geq \frac{t}{2}\right\}d\mathbb{P}_V(v)=2\mathbb{P}\left\{\sum_{i=1}^nb_{I(i)}V_{\sigma(i)}\geq \frac{t}{2}\right\}
\]
Now $\sum_1^nb_{I(i)}V_{\sigma(i)}=\sum_1^nb_{I\sigma^{-1}(i)}V_{i}$. Since the coordinates of $I$ are independent and uniformly distributed in $\{1,2,\cdots ,n\}$, and $\sigma$ is independent of $I$, the distribution of $(I\sigma^{-1}(i))_1^n$ is the same as that of $(I(i))_1^n$. Since $V$ is independent of $(I,\sigma)$, this then implies that the distribution of $(b_{I\sigma^{-1}(i)}V_{i})_1^n$ is the same as that of $(b_{I(i)}V_{i})_1^n$ and the theorem is proved.
\end{proof}

\subsection{Combining the geometric and combinatorial approaches}

Throughout this section we fix $n\in\mathbb{N}$ and $q\in(2,\infty)$ and consider two sequences of i.i.d. non-negative random variables $(W_i)_1^n$ and $(Y_i)_1^n$ such that for all $t>0$,
\[
\mathbb{P}\left\{W_i >t\right\}=e^{q/2}(e+t)^{-q/2}\left(\ln (e+t)\right)^{q/2}   \hspace{2cm}   \mathbb{P}\left\{Y_i >t\right\}=(1+t)^{-q/2}
\]
Let $(b_i)_1^n\in(0,\infty)^n$ and let $(I(i))_1^n$ be an i.i.d. sequence of random integers uniformly distributed in $\{1, 2, \cdots, n\}$ as in Theorem \ref{to iid}. For $\delta\in(0,1)$, let $[\cdot]_{\delta, W}$ be the norm as studied in Section \ref{dual norm} associated to the distribution of $(W_i)_1^n$ (see in particular \ref{sqrbrnoerm} and \ref{Mink fnc Poisson uncon}), and let $[\cdot]_{\delta, Y}$ be the coorresponding norm associated to the distribution of $(Y_i)_1^n$. $\left\vert b \right\vert_0=\left\vert\{i:b_i\neq 0\}\right\vert$.

\begin{proposition}\label{R1}
For all $b\in[0,\infty)^n$ and all $t>0$, with probability at least $1-Ce^{-t^2/2}$,
\begin{equation}\label{R1 linear combo}
\sum_{i=1}^nb_iW_i\leq C_q\left\vert b\right\vert _1+C_q\left(t^2+\ln \left\vert b \right\vert_0\right)e^{t^2/q}\left\vert b\right\vert_{q/2}
\end{equation}
\end{proposition}

\begin{proof}
Assume momentarily that each $b_i\neq0$. For $t>0$ let $G(t)=\mathbb{P}\left\{b_{I(i)}W_i \geq t\right\}$. By independence, and Fubini's theorem applied to $\{1,2, \cdots, n\}\times [0,\infty)$,
\begin{eqnarray*}
G(t)=\frac{1}{n}\sum_{i=1}^n\mathbb{P}\left\{W_i \geq \frac{t}{b_i}\right\}=\frac{1}{n}\sum_{i=1}^ne^{q/2}(e+tb_i^{-1})^{-q/2}\left(\ln (e+tb_i^{-1})\right)^{q/2}
\end{eqnarray*}
and
\begin{eqnarray*}
-tG'(t)=\frac{1}{n}\frac{q}{2}\sum_{i=1}^ne^{q/2}tb_i^{-1}(e+tb_i^{-1})^{-q/2-1}\left(\ln (e+tb_i^{-1})\right)^{q/2-1}\left(\ln (e+tb_i^{-1})-1\right)\leq\frac{q}{2}G(t)
\end{eqnarray*}
Now $H=G^{-1}$ is the reflected quantile function of $b_{I(i)}W_i$ and by the inverse function theorem the inequality $-tG'(t)\leq\frac{q}{2}G(t)$ can be written as $-H'(t)\geq (2/q)H(t)/t$, and then as
\[
-\frac{d}{dt}\ln H(t)\geq \frac{2}{q}t^{-1}
\]
By FTC this implies that for all $\delta, x\in (0,1)$, $H(\delta x)\geq \delta^{-2/q}H(x)$ and the assumption of Proposition \ref{final productiones} is satisfied with $p=q/2$ and $T=1$. By the conclusion of that result (see Remark \ref{remarkiou} for a simplification), with probability at least $1-Ce^{-t^2/2}$,
\[
\sum_{i=1}^nb_{I(i)}W_i\leq C_qH\left(e^{-1-2/e}\frac{1}{n+1}e^{-t^2/2}\right)+Cn\int_0^1H(x)dx
\]
The second term represents $Cn\mathbb{E}(b_{I(i)}W_i)=Cn(\mathbb{E}b_{I(i)})(\mathbb{E}W_i)=C_q\left\vert b\right\vert _1$, and we now focus on the first term. Using $\ln(e+x)\leq C_q\ln\left(e+x^{q/2}\right)$ valid for $x\geq 0$,
\begin{eqnarray*}
G(s)\leq\frac{C_q}{n}\left(\sum_{j=1}^n(e+sb_j^{-1})^{-q/2}\right)\sum_{i=1}^n\frac{(e+sb_i^{-1})^{-q/2}}{\sum_{j=1}^n(e+sb_j^{-1})^{-q/2}}\left(\ln (e+(sb_i^{-1})^{q/2})\right)^{q/2}
\end{eqnarray*}
Since $x\mapsto (\ln (e+x))^{q/2}$ for $x\in[0,\infty)$ is the same order of magnitude as a concave function (up to a factor of $C_q$), we may apply Jensen's inequality to bound this above by,
\begin{eqnarray*}
&&\frac{C_q}{n}\left(\sum_{j=1}^n(e+sb_j^{-1})^{-q/2}\right)\left[\ln \left(e+\sum_{i=1}^n\frac{(e+sb_i^{-1})^{-q/2}}{\sum_{j=1}^n(e+sb_j^{-1})^{-q/2}}(sb_i^{-1})^{q/2}\right)\right]^{q/2}\\
&\leq&\frac{C_q}{n}\left(\sum_{j=1}^n(e+sb_j^{-1})^{-q/2}\right)\left[\ln \left(e+\frac{n}{\sum_{j=1}^n(e+sb_j^{-1})^{-q/2}}\right)\right]^{q/2}
\end{eqnarray*}
If $s\geq \left\vert b\right\vert_\infty$ this is at most
\[
C_qn^{-1}s^{-q/2}\left(\sum_{i=1}^nb_i^{q/2}\right)\left[\ln \left(e+ns^{q/2}\left(\sum_{i=1}^nb_i^{q/2}\right)^{-1}\right)\right]^{q/2}
\]
Setting
\[
s=\max\left\{\left\vert b\right\vert_\infty,C_qe^{t^2/q}\left\vert b\right\vert_{q/2}\left(t^2+\ln n\right)\right\}
\]
we see that indeed $s\geq \left\vert b\right\vert_\infty$, and
\[
G(s)\leq e^{-1-2/e}\frac{1}{n+1}e^{-t^2/2}
\]
so $H\left(e^{-1-2/e}\frac{1}{n+1}e^{-t^2/2}\right)\leq s$. All of this implies that with probability at least $1-Ce^{-t^2/2}$,
\[
\sum_{i=1}^nb_{I(i)}W_i\leq C_q\left\vert b\right\vert _1+C_qe^{t^2/q}\left\vert b\right\vert_{q/2}\left(t^2+\ln n\right)
\]
By Theorem \ref{to iid} this implies that with the same probability,
\[
\sum_{i=1}^nb_iW_i\leq C_q\left\vert b\right\vert _1+C_qe^{t^2/q}\left\vert b\right\vert_{q/2}\left(t^2+\ln n\right)
\]
If $b\in[0,\infty)^n$ has exactly $j$ non-zero coordinates then we may apply this result to the truncated vector $b^*\in[0,\infty)^j$ to improve the $\ln n$ to $\ln j$, arriving at \ref{R1 linear combo}.
 \end{proof}

\begin{proposition}\label{R2}
For all $b\in[0,\infty)^n$ and all $t>0$, with probability at least $1-Ce^{-t^2/2}$,
\begin{equation}\label{R2 linear combo}
\sum_{i=1}^nb_iW_i\leq C_q\left\vert b\right\vert _1+C_qt^2e^{t^2/q}\sum_{i=1}^ni^{-1+2/q}b_{[i]}
\end{equation}
\end{proposition}

\begin{proof}
For any $1\leq k\leq n$, taking $b$ to be the vector with $1$ for its first $k$ coordinates and $0$ for the remaining $n-k$ coordinates, (\ref{R1 linear combo}) implies that with probability at least $1-Ce^{-t^2/2}$,
\[
\sum_{i=1}^nb_iW_i\leq C_qk+C_qe^{t^2/q}k^{2/q}(t^2+\ln k)
\]
For $t^2>(q/2-1)\ln k$ the term $\ln k$ can be dropped by increasing the value of $C_q$. For $t^2\leq(q/2-1)\ln k$,
\[
C_qe^{t^2/q}k^{2/q}(t^2+\ln k)\leq C_q k^{1/2+1/q}\ln k <C_qk
\]
and still the term $\ln k$ can be dropped. So the bound can be written as
\[
\sum_{i=1}^nb_iW_i\leq C_q\left(k+t^2e^{t^2/q}k^{2/q}\right)
\]
This can be written as
\[
F_b^{-1}\left(1-Ce^{-t^2/2}\right)\leq C_qk+C_qk^{2/q}t^2e^{t^2/q}
\]
Set $\delta=Ce^{-t^2/2}$. From the integral representation of $\left[ \cdot\right] _{\delta, W}$ in (\ref{Mink fnc Poisson uncon}),
\begin{eqnarray*}
\left[b\right] _{\delta, W}&\leq& C_q\delta^{-1}\int_0^1\left[k+k^{2/q}s^{-2/q}\left(\ln \frac{C}{s}\right)\right]e^{-\delta^{-1}s}ds\leq C_{q}\left( k+k^{2/q}\delta ^{-2/q}\log
\delta ^{-1}\right)\\
&=& C_q\left( \left\vert
b\right\vert _{1}+\delta ^{-2/q}\log \delta ^{-1}\left\vert b\right\vert
_{q/2}\right)\leq C_q\left\vert b\right\vert _{r,q/2}
\end{eqnarray*}%
where $r=\delta ^{-2/q}\log \delta ^{-1}$ and the last inequality follows from Lemma \ref{formuii} (i.e. the simplified formula for $\left\vert b \right\vert _{r,q/2}$ as a $\{0,1\}$-vector). Since this holds for any such $k$ and $b$, by Lemma \ref{comparing norms}, $\left[ b\right] _{\delta }\leq C_{q}\left\vert b\right\vert _{r,q/2}$ for all $b\in \mathbb{R}^{n}$. (\ref{R2 linear combo}) now follows by recalling Lemma \ref{convexity of quantiles} (that $\left[b\right] _{\delta, W}$ bounds the quantiles of $\sum b_iW_i$), and using the general estimate for $\left\vert b \right\vert _{r,q/2}$ in (\ref{mink lor fnc}).
\end{proof}

\begin{lemma}\label{comparisone}
For all $x\in\mathbb{R}^n$ with $x_1\geq x_2 \geq \cdots \geq x_n$ and $x_1\neq x_n$,
\[
\left\vert x \right\vert_{q/2}\leq\left(\frac{\left\vert x \right\vert_1-nx_n}{x_1-x_n}x_1^{q/2}+\frac{nx_1-\left\vert x \right\vert_1}{x_1-x_n}x_n^{q/2}\right)^{2/q}
\]
\end{lemma}

\begin{proof}
We maximize $f(z)=\sum_1^nz_i^{q/2}$ over the compact set $E$ of all $z\in[0,\infty)^n$ such that $\left\vert z\right\vert_1 = \left\vert x\right\vert_1$ and $x_1=z_1\geq z_2 \geq \cdots \geq z_n=x_n$. If $z\in E$ has the property that $z_i\notin\{x_1,x_n\}$ for more than one value of $i$, say $j$ and $k$ with $j<k$, we can assume (per definition) that $j$ is the least value for which this holds and $k$ is the greatest. But then there exists $\varepsilon>0$ such that $y\in E$ where
\begin{equation*}
y_i=\left\{
\begin{array}{ccc}
z_i&:& i\notin\{j,k\}\\
z_j+\varepsilon&:& i=j\\
z_k-\varepsilon&:& i=k
\end{array}
\right.
\end{equation*}
and $f(y)=(z_j+\varepsilon)^{q/2}+(z_k-\varepsilon)^{q/2}+\sum_{i\notin\{j,k\}}z_i^{q/2}>f(z)$. This follows because by convexity and comparing the slope of secant lines
\[
(z_j+\varepsilon)^{q/2}-z_j^{q/2}>z_k^{q/2}+(z_k-\varepsilon)^{q/2}
\]
By excluding such points, the maximum occurs at a point $z$ such that
\begin{equation*}
z_i=\left\{
\begin{array}{ccc}
x_1&:& i<k\\
x_n&:& i>k
\end{array}
\right.
\end{equation*}
for some $1\leq k\leq n-1$. The value of $k$ is determined by the equation $\left\vert z\right\vert_1 = \left\vert x\right\vert_1$. To re-distribute the $\ell_1^n$ mass of $x$ to form $z$, we start each coordinate at $0$, add $x_n$ to each coordinate, and then distribute the remaining total of $\left\vert x\right\vert_1-nx_n$ in doses of $x_1-x_n$ until we no longer have enough for a full dose. This implies that $k-1=\left\lfloor \alpha\right\rfloor$, where
\[
\alpha=\frac{\left\vert x \right\vert_1-nx_n}{x_1-x_n}
\]
and it follows again by convexity that
\begin{eqnarray*}
f(z)&=&\left\lfloor \alpha\right\rfloor x_1^{q/2}+\left\{(\alpha-\left\lfloor \alpha\right\rfloor)x_1+(1-(\alpha-\left\lfloor \alpha\right\rfloor))x_n\right\}^{q/2}+\left(n-1-\left\lfloor \alpha\right\rfloor\right)x_n^{q/2}\\
&\leq& \alpha x_1^{q/2}+(n-\alpha)x_n^{q/2}
\end{eqnarray*}
Since $x\in E$, $\left\vert x \right\vert_{q/2}\leq f(z)^{2/q}$.
\end{proof}

\begin{proposition}\label{R3}
For all $b\in[0,\infty)^n$ and all $t>0$, with probability at least $1-Ce^{-t^2/2}$,
\begin{equation}\label{R3 linear combo}
\sum_{i=1}^nb_iW_i\leq C_q\left(\left\vert b\right\vert _1+t^2e^{t^2/q}\left\vert b\right\vert_{q/2}\right)
\end{equation}
\end{proposition}

\begin{proof}
Note that by Proposition \ref{R1} (see the explanation about removing the $\ln k$ in the proof of Proposition \ref{R2}), the result already holds as long as
\begin{equation}
\left\vert b\right\vert_{q/2}\left\vert b\right\vert_{0}^{c_q^\#}\leq C_q^\#\left\vert b\right\vert_{1}\label{sparsity and q/2 and 1 norms}
\end{equation}
where $c_q^\#>0$ can be taken to be arbitrarily small and $C_q$ can be arbitrarily large. We fix the value of $c_q^\#$ to be the same at different appearences. Consider any $b\in[0,\infty)^n$ such that (\ref{sparsity and q/2 and 1 norms}) is violated, and assume without loss of generality that $1=b_1\geq b_2\geq b_3 \cdots \geq b_n$. Set $t=b_n$. We must have $t<1/2$ otherwise (\ref{sparsity and q/2 and 1 norms}) would hold. If $b_n>0$ then by the assumption that (\ref{sparsity and q/2 and 1 norms}) is violated and Lemma \ref{comparisone},
\[
\left\vert b\right\vert_{1}\leq  c_qn^{c_q^\#}\left(\frac{\left\vert b \right\vert_1-nt}{1-t}+\frac{n-\left\vert b \right\vert_1}{1-t}t^{q/2}\right)^{2/q}\leq c_qn^{c_q^\#}\left((\left\vert b \right\vert_1-nt)+(n-\left\vert b \right\vert_1)t^{q/2}\right)^{2/q}
\]
We now consider two cases. In Case I, $\left\vert b \right\vert_1-nt\leq (n-\left\vert b \right\vert_1)t^{q/2}$ which leads to the contradiction
\[
\frac{1}{n}\left\vert b\right\vert_{1}\leq c_q n^{-1+c_q^\#+2/q}\left(1-\frac{1}{n}\left\vert b\right\vert_{1}\right)t
\]
This is a contradiction because LHS is the average size of a coordinate while RHS is less than the smallest coordinate. In Case II, $\left\vert b \right\vert_1-nt> (n-\left\vert b \right\vert_1)t^{q/2}$ which leads to
\[
t\leq \frac{1}{n}\left(\left\vert b\right\vert_{1}-C_qn^{-c_q^\#q/2}\left\vert b\right\vert_{1}^{q/2}\right)
\]
Using $s-As^{q/2}\leq C_qA^{-2/(q-2)}$ valid for $s\geq 0$, this is bounded above by
\begin{equation}
C_qn^{-1+c_q^\#q/(q-2)}\label{decay bound after violation}
\end{equation}
This obviously holds also when $b=0$. As we argued before, the same estimate can be applied to a truncated vector of $k$ coordinates, with $n$ replaced with $k$ in this estimate, as long as the truncated vector violates (\ref{sparsity and q/2 and 1 norms}). Let $k$ be the largest integer such that the truncated vector $(b_i)_1^k$ satisfies (\ref{sparsity and q/2 and 1 norms}). Such a value of $k$ exists because every element of $\mathbb{R}^1$ satisfies (\ref{sparsity and q/2 and 1 norms}), and by our assumption that $b$ violates (\ref{sparsity and q/2 and 1 norms}), $1\leq k\leq n-1$. For all $j>k$, it follows from the definition of $k$ that $(b_i)_1^j$ violates (\ref{sparsity and q/2 and 1 norms}), so by applying (\ref{decay bound after violation}) to this vector in dimension $j$, $b_j\leq C_qj^{-1+c_q^\#q/(q-2)}$. By Propositions \ref{R1} and \ref{R2} applied to $(b_i)_1^k$ and $(b_i)_{k+1}^n$ respectively, with probability at least $1-2Ce^{-t^2/2}$,
\begin{eqnarray*}
&&\sum_{i=1}^nb_iW_i=\sum_{i=1}^kb_iW_i+\sum_{i=k+1}^nb_iW_i\\
&\leq& C_q\left(\sum_{i=1}^kb_i+t^2e^{t^2/q}\left(\sum_{i=1}^kb_i^{q/2}\right)^{2/q}+\sum_{i=k+1}^nb_i+t^2e^{t^2/q}\sum_{i=k+1}^n(i-k)^{-1+2/q}b_i\right)
\end{eqnarray*}
The reason for $(i-k)$ is that for $i\geq k+1$, $b_i$ is the $(i-k)^{th}$ coordinate of $(b_i)_{k+1}^n$. By our estimate on $b_j$ for $j>k$,
\[
\sum_{i=k+1}^n(i-k)^{-1+2/q}b_i=\sum_{i=1}^{n-k}i^{-1+2/q}b_{i+k}\leq C_q\sum_{i=1}^{n-k}i^{-1+2/q}(i+k)^{-1+c_q^\#q/(q-2)}
\]
Consider the case where $n\geq 2k$ (the case $n<2k$ is similar, just with one less term). This is bounded above by
\begin{eqnarray*}
C_qk^{-1+c_q^\#q/(q-2)}\sum_{i=1}^{k}i^{-1+2/q}+C_q\sum_{i=k+1}^{\infty}i^{-2+2/q+c_q^\#q/(q-2)}\leq C_qk^{-1+2/q+c_q^\#q/(q-2)}
\end{eqnarray*}
Since $q>2$ we can choose $c_q^\#>0$ so that $-1+2/q+c_q^\#q/(q-2)<0$. The bound on $\sum_1^nb_iW_i$ then becomes
\[
C_q\left(\left\vert b\right\vert_1+t^2e^{t^2/q}(1+\left\vert b\right\vert_{q/2})\right)
\]
and the $1$ can be deleted by our assumption that $b_1=1$.
\end{proof}

\begin{proposition}\label{R4}
For all $b\in[0,\infty)^n$ and all $t>0$, with probability at least $1-Ce^{-t^2/2}$,
\begin{equation}\label{R4 linear combo}
\sum_{i=1}^nb_iY_i\leq C_q\left(\left\vert b\right\vert _1+e^{t^2/q}\left\vert b\right\vert_{q/2}\right)
\end{equation}
\end{proposition}

\begin{proof}
The proof is almost identical to that of Proposition \ref{R1}, but simpler because it does not involve the logarithmic term. Setting
\[
G(t)=\mathbb{P}\left\{b_{I(i)}Y_i\geq t\right\}=\frac{1}{n}\sum_{i=1}^n\mathbb{P}\left\{Y_i\geq\frac{t}{b_i}\right\}=\frac{1}{n}\sum_{i=1}^n\left(1+tb_i^{-1}\right)^{-q/2}
\]
This function satisfies the conditions of Proposition \ref{final productiones} with $p=q/2$, $j=0$ and $k=n-1$, the reasoning for this is the same as in the proof of Proposition \ref{R1}. So applying the simplified bound as in Remark \ref{remarkiou}, with probability at least $1-Ce^{-t^2/2}$,
\begin{eqnarray*}
\sum_{i=1}^nb_{I(i)}Y_i&\leq& C\left(1+t^{-4/q}e^{-t^2/q}\min\{t^2,n\}\right)H\left(e^{-1-2/e}\frac{1}{n+1}e^{-t^2/2}\right)+C\left\vert b\right\vert_1 \mathbb{E}Y_1\\
&\leq&C\left(1+t^{-4/q}e^{-t^2/q}\min\{t^2,n\}\right)e^{t^2/q}\left\vert b\right\vert_{q/2}+C\left\vert b\right\vert_1 \mathbb{E}Y_1
\end{eqnarray*}
where $H=G^{-1}$. By Theorem \ref{to iid} the result can be transferred to $\sum_{i=1}^nb_iY_i$.
\end{proof}

\section{Proof of Theorem \ref{poly tails Lip}}

First we assume that each $X_i$ has a distribution that is symmetric about $0$, and then we may assume without loss of generality that each $a_i\geq 0$, and that $\left\vert a\right\vert=1$. We will use a standard trick in analysis of introducing random signs. Let $(\varepsilon_i)_1^n$ be an i.i.d. sequence of Rademacher random variables, i.e. each $\varepsilon_i$ takes the values $\pm 1$ each with probability $1/2$, independent of $(X_i)_1^n$. By the assumed independence and symmetry, the vector $(\varepsilon_i\left\vert X_i\right\vert)_1^n$ has the same distribution as $(X_i)_1^n$, and
\begin{eqnarray*}
\mathbb{P}\left\{\left\vert \sum_{i=1}^na_iX_i\right\vert >t\right\}&=&\mathbb{P}\left\{\left\vert \sum_{i=1}^na_i\varepsilon_i\left\vert X_i\right\vert\right\vert >t\right\}=\int_{[0,\infty)^n}\mathbb{P}\left\{\left\vert \sum_{i=1}^nx_i\varepsilon_i\right\vert >t\right\}d\mathbb{P}_{\left(a_i\left\vert X_i\right\vert\right)_1^n}(x)
\end{eqnarray*}
where $\mathbb{P}_{\left(a_i\left\vert X_i\right\vert\right)_1^n}$ is the distribution of $\left(a_i\left\vert X_i\right\vert\right)_1^n$. Since $(\varepsilon_i)_1^n$ are independent and sub-Gaussian, with universal constants,
\[
\mathbb{P}\left\{\left\vert \sum_{i=1}^nx_i\varepsilon_i\right\vert >t\right\}\leq C\exp\left(\frac{-ct^2}{\left\vert x \right\vert^2}\right)
\]
So, using Proposition \ref{R4}, the probability a few lines above is at most
\begin{eqnarray*}
C\mathbb{E}\exp\left(\frac{-ct^2}{\sum_{1}^n a_i^2X_i^2} \right)\leq C\int_0^\infty ue^{-u^2/2}\exp\left(\frac{-ct^2}{C_q\sum a_i^2+C_qe^{u^2/q}\left(\sum a_i^q\right)^{2/q}}\right)du
\end{eqnarray*}
Here we are using the fact that the tail probabilities of $X_i^2$ decay as $t^{-q/2}$. This integral splits into two, the first one being
\[
C\int_0^{\sqrt{2q\ln(\left\vert a\right\vert/\left\vert a\right\vert_q)}} ue^{-u^2/2}\exp\left(\frac{-ct^2}{C_q\sum a_i^2}\right)du\leq Ce^{-c_qt^2}
\]
and the second one being
\[
C\int_{\sqrt{2q\ln(\left\vert a\right\vert/\left\vert a\right\vert_q)}}^\infty ue^{-u^2/2}\exp\left(\frac{-ct^2}{C_qe^{u^2/q}\left(\sum a_i^q\right)^{2/q}}\right)du=C_qt^{-q}\left\vert a \right\vert_q^{q}\int_0^{C_qt^2\left\vert a \right\vert^{-2}} e^{-\omega}\omega^{-1+q/2}d\omega
\]
where we have set $\omega=c_qt^2e^{-u^2/q}\left\vert a \right\vert_q^{-2}$. This implies
\[
\mathbb{P}\left\{\left\vert \sum_{i=1}^na_iX_i\right\vert >t\right\}\leq Ce^{-c_qt^2}+C_qt^{-q}\left\vert a \right\vert_q^{q}
\]
which can be written as
\[
\mathbb{P}\left\{\left\vert \sum_{i=1}^na_iX_i\right\vert >C_q\left(t\left\vert a \right\vert+e^{t^2/(2q)}\left\vert a \right\vert_q\right)\right\}\leq Ce^{-t^2/2}
\]
and therefore
\[
\mathbb{P}\left\{\left\vert \sum_{i=1}^na_iX_i-\mathbb{M}\sum_{i=1}^na_iX_i\right\vert >C_q\left(t\left\vert a \right\vert+e^{t^2/(2q)}\left\vert a \right\vert_q\right)\right\}\leq Ce^{-t^2/2}
\]
When $X_i$ are no longer assumed to be symmetric, apply what has been proved to $X_i-X_i'$, where $(X_i')_1^n$ is an independent copy of $(X_i')_1^n$, so
\begin{equation}
\mathbb{P}\left\{\left\vert \left(\sum_{i=1}^na_iX_i-\mathbb{M}\sum_{i=1}^na_iX_i\right)-\left(\sum_{i=1}^na_iX_i'-\mathbb{M}\sum_{i=1}^na_iX_i'\right)\right\vert >C_q\left(t\left\vert a \right\vert+e^{t^2/(2q)}\left\vert a \right\vert_q\right)\right\}\label{prob in quest}
\end{equation}
is at most $Ce^{-t^2/2}$. However, by independence, this probability can be expressed as the integral over $\mathbb{R}^n$ of the function that maps $x$ to
\[
\mathbb{P}\left\{\left\vert \left(\sum_{i=1}^na_iX_i-\mathbb{M}\sum_{i=1}^na_iX_i\right)-\left(\sum_{i=1}^na_ix_i-\mathbb{M}\sum_{i=1}^na_iX_i'\right)\right\vert >C_q\left(t\left\vert a \right\vert+e^{t^2/(2q)}\left\vert a \right\vert_q\right)\right\}
\]
Integration is performed with respect to ${P}_X$, the distribution of $(X_i)_1^n$. However, setting
\[
E=\left\{x\in\mathbb{R}^n:\sum_{i=1}^na_ix_i\leq\mathbb{M}\sum_{i=1}^na_iX_i'\right\}
\]
we see that ${P}_X(E)\geq 1/2$ and for all $x\in E$,
\begin{eqnarray*}
&&\mathbb{P}\left\{\left(\sum_{i=1}^na_iX_i-\mathbb{M}\sum_{i=1}^na_iX_i\right)-\left(\sum_{i=1}^na_ix_i-\mathbb{M}\sum_{i=1}^na_iX_i'\right) >C_q\left(t\left\vert a \right\vert+e^{t^2/(2q)}\left\vert a \right\vert_q\right)\right\}\\
&\geq&\mathbb{P}\left\{\sum_{i=1}^na_iX_i-\mathbb{M}\sum_{i=1}^na_iX_i >C_q\left(t\left\vert a \right\vert+e^{t^2/(2q)}\left\vert a \right\vert_q\right)\right\}
\end{eqnarray*}
So the probability in \ref{prob in quest} is at most $Ce^{-t^2/2}$ and at least
\[
\frac{1}{2}\mathbb{P}\left\{\sum_{i=1}^na_iX_i-\mathbb{M}\sum_{i=1}^na_iX_i >C_q\left(t\left\vert a \right\vert+e^{t^2/(2q)}\left\vert a \right\vert_q\right)\right\}
\]
so
\[
\mathbb{P}\left\{\sum_{i=1}^na_iX_i-\mathbb{M}\sum_{i=1}^na_iX_i >C_q\left(t\left\vert a \right\vert+e^{t^2/(2q)}\left\vert a \right\vert_q\right)\right\}\leq Ce^{-t^2/2}
\]
A similar argument implies
\[
\mathbb{P}\left\{\sum_{i=1}^na_iX_i-\mathbb{M}\sum_{i=1}^na_iX_i <-C_q\left(t\left\vert a \right\vert+e^{t^2/(2q)}\left\vert a \right\vert_q\right)\right\}\leq Ce^{-t^2/2}
\]
and these last two estimates imply \ref{poly conc one} with $\mathbb{M}$ instead of $\mathbb{E}$, but that deviation inequality gives a bound on the distance between $\mathbb{M}$ and $\mathbb{E}$, and using the triangle inequality we can then replace $\mathbb{M}$ with $\mathbb{E}$ (this is very standard).


\begin{thebibliography}{99}

\bibitem{BaCatRo} Barthe, F., Cattiaux, P., Roberto C.: Concentration for
independent random variables with heavy tails. AMRX, Appl. Math. Res.
Express (2), 39-60 (2005)

\bibitem{Bob10} Bobkov, S. G.: Convex bodies and norms associated to convex measures. Probab. Theory Relat. Fields 147, 303-332 (2010)

\bibitem{BLMass} Boucheron, S., Lugosi, G., Massart, P.: Concentration
Inequalities, A nonasymptotic theory of independence. Clarendon press,
Oxford 2012.

\bibitem{BouThom} Boucheron, S., Thomas, M.: Concentration inequalities for
order statistics. Electron. Commun. Probab. 17 (51), 1-12 (2012).

\bibitem{Fr0} Fresen, D. J.: A multivariate Gnedenko law of large numbers. Ann.
Probab. 41 (5), 3051-3080 (2013)

\bibitem{FrVi} Fresen, D. J., Vitale, R.: Concentration of random polytopes
around the expected convex hull. Electron. Commun. Probab. 19 (59), 1-8
(2014)

\bibitem{Fr20} Fresen, D. J: Variations and extensions of the Gaussian
concentration inequality, Part I. To appear in Quaest. Math. Published online at https://www.tandfonline.com/doi/abs/10.2989/16073606.2022.2074908. Preprint available at https://arxiv.org/abs/1812.10938

\bibitem{Fr conv maj} Fresen, D. J: Optimal tail comparison under convex majorization. arXiv: 2207.01872.

\bibitem{FrIIb} Fresen, D. J: Variations and extensions of the Gaussian
concentration inequality, Part II. arXiv: 2203.12523

\bibitem{Fr22} Fresen, D. J: Order statistics and convex subsets of non-convex Lorentz balls. arXiv:2206.10496

\bibitem{FuNa71} Fuk, D. X. and Nagaev, S. V.: Probability inequalities for sums of independent random variables. Theor. Probability Appl. 16, 643-660 (1971)

\bibitem{GaRaRe} Gantert, N., Ramanan, K., Rembart, F.: Large deviations for weighted sums of stretched exponential random variables. Electron. Commun. Probab. 19 (41), 1-14 (2014)


\bibitem{GLPTJ17} Gu\'{e}don, O., Litvak, A. E., Pajor, A.,
Tomczak-Jaegermann, N.: On the interval of fluctuations of the singular
values of random matrices. J. Eur. Math. Soc. (JEMS) 19 (5), 1469-1505 (2017)

\bibitem{Lat97} Lata\l a, R.: Estimates of moments of sums of independent
real random variables. Ann. Probab. 25 (3), 1502-1513 (1997)

\bibitem{MaOlAr} Marshall, A. W., Olkin, I., Arnold, B. C.: Inequalities:
theory of majorization and its applications. Second edition. Springer Series
in Statistics. Springer, New York, 2011. xxviii+909 pp. ISBN:
978-0-387-40087-7

\bibitem{MeiNad} Meilijson, I., N\'{a}das, A.: Convex Majorization with an Application to the Length of Critical Paths. J. Appl. Prob. 16 (3), 671-677 (1979)

\bibitem{MikNag} Mikosch, T., Nagaev, A. V.: Large deviations of heavy-tailed sums with applications in insurance. Extremes 1 (1), 81-110, (1998)

\bibitem{NagAop} Nagaev, S. V.: Large deviations of sums of independent random variables. Ann. Probab. 7 (5), 745–789 (1979).

\bibitem{Na} Naor, A.: The surface measure and cone measure on the sphere of 
$\ell _{p}^{n}$. Trans. Amer. Math. Soc. 359 (3), 1045-1079 (2007)


\bibitem{Pet75} Petrov, V. V.: Sums of independent random variables.
Springer-Verlag 1975 (translated from Russian into English)

\bibitem{Pin98} Pinelis, I.: Optimal tail comparison based on comparison of
moments, Proceedings of the Conference on High Dimensional Probability
(Oberwolfach, Germany, 1996), Progress in Probability, 43, Birkhauser,
Basel, Switzerland, (1998), 297-314.

\bibitem{Pin99} Pinelis, I.: Fractional sums and integrals of $r$-concave
tails and applications to comparison probability inequalities. Advances in
stochastic inequalities (Atlanta, GA, 1997), 149-168, Contemp. Math., 234,
Amer. Math. Soc., Providence, RI, 1999

\bibitem{Pis0} Pisier, G.: Probabilistic methods in the geometry of Banach
spaces. CIME, Varenna, 1985. Lecture Notes in Mathematics 1206, 167-241
(1986)


\bibitem{ScWer} Sch\"{u}tt, C., Werner, E.: The convex floating body. \
Math. Scand. 66, 275-290 (1990)

\bibitem{ScWe} Shorack, G. R., Wellner, J. A.: Empirical Processes with Applications to Statistics. Wiley, New York, 1986.

\bibitem{Tal3} Talagrand, M.: A new isoperimetric inequality and the concentration of measure phenomenon. Geometric aspects of functional analysis (1989–90), 94–124, Lecture Notes in Math., 1469, Springer, Berlin, 1991.


\end{thebibliography}
\end{document}